 \let\mathscr\relax
\newtheorem{Lemma}{Lemma}[section]
\newtheorem{remark}[Lemma]{Remark}
\newtheorem{theorem}[Lemma]{Theorem}
\newtheorem{proposition}[Lemma]{Proposition}
\newtheorem{corollary}[Lemma]{Corollary}
\newcommand{\Cal}[1]{{\mathcal #1}}
\newcommand{\Hom}{\operatorname{Hom}}
\newcommand{\figref}[1]{\figurename~\ref{#1}}
\DeclareMathOperator{\Triv}{\mathbf {Triv}}
\DeclareMathOperator{\Equiv}{\mathbf {Equiv}}
\DeclareMathOperator{\ParOrd}{\mathbf {ParOrd}}
\DeclareMathOperator{\Aut}{Aut}
\newcommand{\pre}{\mathbf{Preord}}
\DeclareMathOperator{\sgn}{sgn}
\newcommand{\cmat}{\left(\begin{array}}
\newcommand{\fmat}{\end{array}\right)}
\DeclareMathOperator{\Ob}{Ob}
\title[An extension of properties \dots category of mappings]{An extension of properties of symmetric group to monoids and a pretorsion theory in the category of mappings}
  \author[Alberto Facchini]{Alberto Facchini}
\address{Dipartimento di Matematica ``Tullio Levi-Civita'', Universit\`a di Padova,\linebreak 35121 Padova, Italy}
 \email{facchini@math.unipd.it}
\thanks{The first author is partially supported by Dipartimento di Matematica ``Tullio Levi-Civita'' of Universit\`a di Padova (Project BIRD163492/16 ``Categorical homological methods in the study of algebraic structures'' and Research program DOR1828909 ``Anelli e categorie di moduli''). }
\author[Leila Heidari Zadeh]{Leila Heidari Zadeh}
\address{Department of Mathematics, University of Kurdistan, P. O. Box 416,\linebreak Sanandaj, Iran.}
 \email{heidaryzadehleila@yahoo.com; l.heidaryzadeh@sci.uok.ac.ir}
   \keywords{Symmetric group, Full transformation monoid, Pseudoforest, Pretorsion theory. \\ \protect \indent 2010 {\it Mathematics Subject Classification.} Primary 18A99, 20M20.
Secondary 08A60.} 
\begin{document}
\begin{abstract}
Several elementary properties of the symmetric group $S_n$ extend in a nice way to the full transformation monoid $M_n$ of all maps of the set $X:=\{1,2,3,\dots,n\}$ into  itself. The group $S_n$ turns out to be in some sense the torsion part of the monoid $M_n$. More precisely, there is a pretorsion theory in the category of all maps $f\colon X\to X$, $X$ an arbitrary finite non-empty set, in which bijections are exactly the torsion objects.\end{abstract}
 \maketitle
    
\section{Introduction}\label{s:1}
In all this paper,  $n\ge 1$ denotes a fixed integer and $X$ is the set \linebreak$\{1,2,3,\dots,n\}$. We teach every year to our first year students that:

(1) Every permutation can be written as a product of disjoint cycles, in a unique way up to the order of the factors. (We prove this associating a graph to every permutation, so that the decomposition as a product of pairwise disjoint cycles of the permutation follows from the partition of the graph into its connected components).

(2) Disjoint cycles permute.

(3) Every permutation can be written as a product of transpositions.

(4) Let $S_n$ be  the symmetric group, i.e., the group of all permutations of $X$. There is a group morphism $\sgn\colon S_n\to\{1,-1\}$. For every permutation $f\in S_n$, the number $\sgn(f)$ is called the  {\em sign} of the permutation $f$. 

(5) As a consequence, the group  $S_n$ of permutations has a normal subgroup $A_n$, the alternating subgroup, which is a subgroup of index $2$ of $S_n$ when $n\ge 2$.
It follows that, for $n\ge 2$, $S_n$ is the semidirect product of $A_n$ and any subgroup of $S_n$ generated by a trasposition.

In this paper, we develop the naive idea of extending the five results above from permutations $X\to X$ to arbitrary mappings $X\to X$. That is, we generalize the five results above from the group $S_n$ to the monoid $M_n$ of all mappings $X\to X$. The operation on $M_n$ is the composition of mappings. The group $S_n$ has order $n!$, while the monoid $M_n$ has $n^n$ elements. We find that:

(1) Every mapping $f\colon X\to X$ can be written as a product of ``forests on a cycle'', in a unique way up to the order of the factors. (This corresponds to the partition into connected components of an undirected graph $G^u_f$ associated to the mapping~$f$). The decomposition of $f$ as a product of disjoint forests on a cycle corresponds exactly to the decomposition of a permutation $\sigma$ as a product of disjoint cycles.

(2) Any two disjoint forests on a cycle permute.

(3) Any forest on a cycle can be written as a product of moves and transpositions. A move is a mapping $X\to X$ that fixes all elements of $X$ except for one. It is an idempotent mapping $X\to X$.

(4) Let $M_n$ be  the monoid of all mappings $X\to X$. Then $M_n$ is the disjoint union of its groups of units $S_n$ and the completely prime two-sided ideal $I_n$ of $M_n$ consisting of all non-injective mappings $X\to X$. As a consequnce, there is a monoid morphism $\sgn\colon M_n\to\{0,1,-1\}$ of the monoid $M_n$ into the multiplicative monoid $\{0,1,-1\}$. The mappings $f\in M_n$ with $\sgn(f)=1$ are exactly those in the alternating group $A_n$, the mappings $f$ with $\sgn(f)=-1$ are those in the coset $S_n\setminus A_n$ of $S_n$, and the  mappings $f\in M_n$ with $\sgn(f)=0$ are those in the two-sided ideal $I_n$.

(5) The subsemigroup $I_n$ of $M_n$ is generated by the set of all moves in $M_n$. The submonoid $I'_n:=I_n\cup\{1_{M_n}\}$ generated by the set of all moves is such that $M_n=S_n\cup I'_n$ and $S_n\cap I'_n= \{1_{M_n}\}$. There is a canonical epimorphism of the semidirect product $I'_n\rtimes S_n$ of $I'_n$ and $S_n$ onto $M_n$. 

\medskip

In the second part of the paper, we consider the category $\Cal M$ whose objects $(X,f)$ are all finite nonempty sets $X$ with a mapping $f\colon X\to X$. We show that in $\Cal M$ there is a very nice and interesting pretorsion theory $(\Cal C,\Cal F)$ in the sense of \cite{AC}. The pretorsion class $\Cal C$ consists of all objects $(X,f)$ of $\Cal M$ with $f$ a bijection, and $\Cal F$ consists of all objects $(X,f)$  of $\Cal M$ for which the graph $G^u_f$ associated to $f$ is a forest (equivalently, of all objects $(X,f)$ of $\Cal M$ with $f^n=f^{n+1}$).

\medskip

In this paper, all undirected graphs are simple and don't have loops, while directed graphs are simple, but can have loops.

\section{A description of mappings $X\to X$.}

Recall that, in all the paper, $X:=\{1,2,3,\dots,n\}$ for some positive integer $n$.

\begin{proposition}\label{leila} Let $f\colon X\to X$ be a mapping. Then there exist an integer $m\ge0$ and a partition $\{\,A_i\mid i=0,1,2,\dots, m\,\}$ of $X$ such that $f(A_0)=A_0$ and $f(A_i)\subseteq A_{i-1}$ for every $ i=1,2,\dots,m$. \end{proposition}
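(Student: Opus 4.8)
The plan is to build the partition layer by layer, starting from an $f$-invariant ``core'' and then peeling off preimages. First I would locate $A_0$. Consider the descending chain $X \supseteq f(X) \supseteq f^2(X) \supseteq \cdots$; since $X$ is finite this stabilizes, so there is a least $k$ with $f^k(X) = f^{k+1}(X)$, and we set $A_0 := f^k(X)$. Then $f(A_0) = f^{k+1}(X) = f^k(X) = A_0$, so $A_0$ is $f$-invariant and $f$ restricts to a \emph{surjective} (hence bijective, by finiteness) self-map of $A_0$. This takes care of the requirement $f(A_0) = A_0$.

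Next I would define the remaining blocks by distance-to-$A_0$ under iteration of $f$. For $x \in X$, since $f^k(X) = A_0$ we have $f^k(x) \in A_0$, so the set $\{\, j \ge 0 \mid f^j(x) \in A_0 \,\}$ is nonempty; let $d(x)$ be its minimum. Set $A_i := \{\, x \in X \mid d(x) = i \,\}$ for $i \ge 1$ (and note $A_0 = \{x : d(x) = 0\}$ is consistent, since $x \in A_0 \Rightarrow f(x)\in A_0$ means $d$ is correctly defined to be $0$ there). Let $m := \max_{x} d(x)$, which is finite and at most $k$. The sets $A_0, A_1, \dots, A_m$ are pairwise disjoint by definition of $d$ and cover $X$, so they form a partition (discarding any empty $A_i$ — actually one should check the range of values of $d$ has no gaps, which follows because if $d(x) = i \ge 1$ then $d(f(x)) = i-1$, so every value below $m$ is attained).

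The key verification is $f(A_i) \subseteq A_{i-1}$ for $i \ge 1$. Take $x$ with $d(x) = i \ge 1$. Then $f^i(x) \in A_0$ but $f^{i-1}(x) \notin A_0$; writing $y := f(x)$, we get $f^{i-1}(y) = f^i(x) \in A_0$, so $d(y) \le i-1$. Conversely $f^{i-2}(y) = f^{i-1}(x) \notin A_0$ (when $i \ge 2$; when $i=1$ this condition is vacuous and $d(y)=0$ directly), so $d(y) \ge i-1$. Hence $d(f(x)) = i-1$, i.e. $f(A_i) \subseteq A_{i-1}$, as required.

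I do not expect a genuine obstacle here: the argument is entirely elementary, resting only on the fact that a descending chain of finite sets stabilizes and that a surjective self-map of a finite set is bijective. The one point needing a little care is the bookkeeping around empty blocks — if one insists the $A_i$ be nonempty, one should either prove $d$ takes all values $0,1,\dots,m$ (done above via $d(f(x)) = d(x)-1$) or simply allow empty blocks in the partition, which the statement as phrased tolerates. An alternative, perhaps cleaner, route is induction on $|X|$: remove $A_0 = f^k(X)$, apply the inductive hypothesis to $X \setminus A_0$ with the map $x \mapsto f(x)$ (which lands back in $X \setminus A_0$? — no, it need not, so this variant needs the layer $f^{-1}(A_0) \setminus A_0$ stripped off instead), reindexing the resulting blocks shifted up by one. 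The direct distance-function approach above avoids this friction and is the one I would write up.
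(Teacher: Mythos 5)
Your proof is correct and is essentially the paper's own argument: both take $A_0$ to be the stabilized image $f^k(X)$ and then stratify $X$ by the number of iterations of $f$ needed to land in $A_0$ (your level sets $\{x : d(x)=i\}$ coincide with the paper's $B_i\setminus B_{i-1}$, where $B_i=f^{-1}(B_{i-1})$). The verification $f(A_i)\subseteq A_{i-1}$ is the same computation in both write-ups.
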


\begin{proof} Consider the descending chain $X\supseteq f(X) \supseteq f^2(X) \supseteq \dots$ of subsets of~$X$. This descending chain is stationary, hence there exists a least index $t\ge0$ with $f(f^t(X))=f^t(X)$. Set $A_0:=f^t(X)$, so $f(A_0)=A_0$. Now define an ascending chain of subsets $B_i$ of $X$, $i\ge0$, setting $B_0:=A_0$, and $B_i=f^{-1}(B_{i-1})$ for every $i>0$. Then $B_i\supseteq B_{i-1}$ for every $i>0$ (Induction on $i>0$. For $i=1$:  $f(A_0)=A_0$, so that $A_0\subseteq f^{-1}(A_0)$, i.e., $B_0\subseteq B_1$. Suppose the inclusion true for $i$, i.e., $B_i\supseteq B_{i-1}$. Then $B_{i+1}=f^{-1}(B_i)\supseteq
f^{-1}(B_{i-1})=B_i$). Also, $B_i\supseteq f^{t-i}(X)$ for every $i=0,1,2,\dots,t$, as can be easily seen by induction on $i$. Hence $B_t=X$. Thus we have an ascending chain $B_0\subseteq B_1\subseteq B_2\subseteq \dots\subseteq B_t=X$ of subsets of $X$. Let $m\ge0$ be the smallest integer $\ge 0$ with $B_m=X$. Then the chain $B_0\subset B_1\subset B_2\subset \dots\subset B_m=X$ is strictly ascending. Set $A_i:=B_i\setminus B_{i-1}$ for every $i=1,2,\dots, m$, so that $\{\,A_i\mid i=0,1,2,\dots, m\,\}$ is a partition of $X$. 

It remains to prove that $f(A_i)\subseteq A_{i-1}$ for every $ i=1,2,\dots,m$. We have that $B_i=f^{-1}(B_{i-1})$ for every $i>0$. 
Thus,  if $x\in A_i$, then $x\in B_i$ and $x\notin B_{i-1}$, so $x\in f^{-1}(B_{i-1})$ and (for $i\ge 2$) $x\notin f^{-1}(B_{i-2})$. Therefore $f(x)\in B_{i-1}\setminus B_{i-2}=A_{i-1}$, as desired. 
\end{proof}

For any finite set  $X$, a mapping $X\to X$ is injective, if and only if it is surjective, if and only if it is bijection.
Therefore the role of $A_0$ in the previous proposition is completely different from the role of the other blocks $A_1,\dots, A_m$ of the partition: the restriction of $f$ to $A_0$ is a permutation of $A_0$, while, for $i>0$, $f$ maps $A_i$ into~$A_{i-1}$.

\section{The directed graph and the undirected graph associated to a mapping $X\to X$.}\label{XXX}

Recall that all undirected graphs in this paper don't have multiple edges and loops, while directed graphs don't have multiple edges, but can have loops.

Given any mapping $f\colon X\to X$, it is possible to associate to $f$ a directed graph $G^d_f=(X,E^d_f)$, called the {\em graph of the function $f$},  having $X$ as a set of vertices and $E^d_f:=\{\,(i,f(i))\mid i\in X\,\}$ as a set of arrows. Hence $G^d_f$ has $n$ vertices and $n$ arrows, one arrow from $i$ to $f(i)$ for every $i\in X$. In the  directed graph $G^d_f$, every vertex has outdegree $1$. In Graph Theory, a direct graph in which every vertex has outdegree $1$ is sometimes called a {\em directed pseudoforest}. The corresponding $m$, defined as in Proposition~\ref{leila}, will be called the $height$ of the pseudoforest.

Similarly, it is possible to associate to $f$ an undirected graph $G^u_f=(X,E^u_f)$ having $X$ as a set of vertices and $E^u_f:=\{\,\{i,f(i)\}\mid i\in X,\ f(i)\ne i\,\}$ as a set of (undirected) edges. The graph  $G^d_f$ also has $n$ vertices, but $\le n$ edges. This occurs because of the fixed points, that is, the elements $i\in X$ with $f(i)=i$, and because of traspositions, i.e., the pairs $i,j$ of distinct elements of $X$ with $f(i)=j$ and $f(j)=i$. 

We will now describe the connected components of the graph $G^u_f$. Fix a vertex $x_0\in X$. We will determine the connected component of $x_0$. The connected component of $x_0$ in $G^u_f$ consists of all vertices $x\in X$ for which there exists a path from $x_0$ to $x$ in $G^u_f$. In our graph $G^u_f$, the edges are all of the form $\{x,f(x)\}$, provided $x\ne f(x)$. Thus the vertices at a distance $\le 1$ from $x_0$ are exactly those in the set $\{x_0,f(x_0)\}\cup f^{-1}(x_0)$. Hence the connected component of $x_0$ is the closure of $\{x_0\}$ with respect to taking images and inverse images via $f$.
Starting from the fixed vertex $x_0\in X$, we can define a sequence of vertices $x_0,x_1,x_3,\dots$ in $X$ with $x_{t+1}=f(x_t)$ for every $t\ge0$. Since $X$ is finite, there exists a smallest index $t_0\ge0$ with $x_{t_0}=x_t$ for some index $t>t_0$. Let $t_1$ be the smallest index $t>t_0$ with $x_{t_0}=x_t$. Then, in the graph $G^d_f$,  there is a simple (=no repetitions of vertices and edges) directed cycle $x_{t_0}, x_{t_0+1},\dots,x_{t_1}$. Now recursively define an ascending chain of subsets $B_0\subseteq B_1\subseteq B_2\subseteq \dots$ of $X$ setting $B_0:=\{x_{t_0}, x_{t_0+1},,\dots,x_{t_1-1}\}$ (the set of vertices on the cycle, which can be also a cycle of length $1$) and $B_{i+1}:= f^{-1}(B_i)$. Since the chain of submodules is ascending and the set $X$ is finite, the chain is necessarily stationary, so that there exists an index $m$ with $B_{m+1}=B_m$. Equivalently, $B_m=  f^{-1}(B_m)$. Moreover, $f(B_i)\subseteq B_{i-1}$ for every $i\ge 1$. There is no edge between any vertex in $X\setminus B_m$ and any vertex in $B_m$. Also, any two vertices in $B_m$ are connected by a path in $G^u_f$. The vertex $x_0$ is in $B_m$, because $f^{t_0}(x_0)=x_{t_0}\in B_0$, so that $x_0\in (f^{t_0})^{-1}(B_0)\subseteq B_{t_0}\subseteq B_m$. This proves that $B_m$ is the connected component of $G^u_f$ containing $x_0$. 

The full subgraph of $G^d_f=(X,E^d_f)$ whose set of vertices is $B_m$ is a connected directed pseudoforest. We will call a connected directed pseudoforest a {\em forest on a cycle}. Its typical form is like in ~\figref{01}.

\begin{figure}
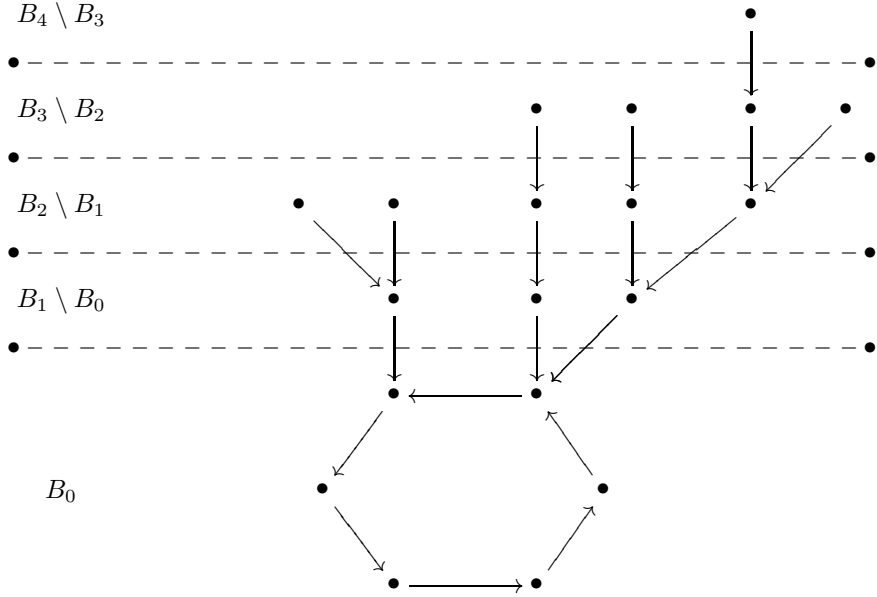

\[  \xygraph{
!{(2.25,1) }*+{\bullet_{}}="a"
!{(3,0) }*+{\bullet_{}}="b"
!{(4.5,0) }*+{\bullet_{}}="c"
!{(5.2,1)}*+{\bullet_{}}="d"
!{(4.5,2) }*+{\bullet_{}}="e"
!{(3,2) }*+{\bullet_{}}="f"
!{(3,3) }*+{\bullet_{}}="g"
!{(4.5,3)}*+{\bullet_{}}="h"
!{(5.5,3) }*+{\bullet_{}}="i"
!{(2,4) }*+{\bullet_{}}="j"
!{(3,4)}*+{\bullet_{}}="k"
!{(4.5,4) }*+{\bullet_{}}="l"
!{(5.5,4) }*+{\bullet_{}}="m"
!{(6.75,4) }*+{\bullet_{}}="n"
!{(4.5,5)}*+{\bullet_{}}="o"
!{(5.5,5) }*+{\bullet_{}}="p"
!{(6.75,5) }*+{\bullet_{}}="q"
!{(7.75,5) }*+{\bullet_{}}="r"
!{(6.75,6)}*+{\bullet_{}}="s"
!{(-.5,1) }*+{{B_0}}
!{(-.5,3) }*+{{B_1\setminus B_0}}
!{(-.5,4)}*+{{B_2\setminus B_1}}
!{(-.5,5) }*+{{B_3\setminus B_2}}
!{(-.5,6) }*+{{B_4\setminus B_3}}
!{(-1,2.5) }*+[white]{\bullet}="a1"
!{(-1,3.5) }*+[white]{\bullet}="a2"
!{(-1,4.5) }*+[white]{\bullet}="a3"
!{(-1,5.5) }*+[white]{\bullet}="a4"
!{(8,2.5) }*+[white]{\bullet}="z1"
!{(8,3.5) }*+[white]{\bullet}="z2"
!{(8,4.5) }*+[white]{\bullet}="z3"
!{(8,5.5) }*+[white]{\bullet}="z4"
"a":"b" "b":"c""c":"d"
"d":"e" "e":"f" "f":"a"
"j":"g" "g":"f""k":"g"
"o":"l" "l":"h" "h":"e"
"p":"m"
 "m":"i""i":"e"
"s":"q" "q":"n" "n":"i"
"i":"e""r":"n"
"a1"-@{--}"z1""a2"-@{--}"z2""a3"-@{--}"z3""a4"-@{--}"z4"
}  \]
\caption{\small A forest on a cycle, that is, a connected directed pseudoforest.}\label{01} 
\end{figure}

We will say that a function $f\colon X\to X$ is a {\em forest on a cycle} if there are a subset $A$ of $X$ and an element $x_0\in A$ such that: (1) for every $x\in A$ there exists an integer $t\ge 0$ with either $f^t(x_0)=x$ or $f^t(x)=x_0$, and (2) $f(y)=y$ for every $y\in X\setminus A$. Hence a bijection $f\colon X\to X$ is a forest on a cycle if and only if it is a cycle. 

We say that two functions $f,g\colon X\to X$ are {\em disjoint} if, for every $x\in X$, either $f(x)=x$ or $g(x)=x$ (or both).
Similarly to the fact that any two disjoint cycles commute, any two disjoint functions commute. In particular, any two disjoint forests on a cycle commute.

Since every undirected graph can be decomposed in a unique way into its connected components, in the same way as we see that every permutation can be written as a product of disjoint cycles in a unique way up to the order of the factors, we similarly get that:

\begin{theorem}\label{0} Every mapping $f\colon X\to X$ can be written as a product of disjoint forests on a cycle, in a unique way up to the order of the factors.\end{theorem}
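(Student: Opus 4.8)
The plan is to mimic exactly the uniqueness-of-cycle-decomposition argument for permutations, using the undirected graph $G^u_f$ as the organizing device, since the whole preceding discussion was set up precisely for this. First I would record the combinatorial fact just established: the vertex set $X$ decomposes as a disjoint union $X = \bigsqcup_{k} B^{(k)}$ of the connected components of $G^u_f$, and each full subgraph of $G^d_f$ on a component $B^{(k)}$ is a connected directed pseudoforest, i.e.\ a forest on a cycle. Note also that $f$ maps each component into itself: if $x \in B^{(k)}$ then $\{x,f(x)\}$ is an edge of $G^u_f$ (when $f(x)\neq x$) or $f(x)=x$, so in either case $f(x)$ lies in the same component as $x$.

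Next, for each component $B^{(k)}$ I would define a map $f_k \colon X \to X$ by $f_k(x) = f(x)$ for $x \in B^{(k)}$ and $f_k(y) = y$ for $y \in X \setminus B^{(k)}$. The closing remarks of the excerpt need to be invoked here: the restriction of the pseudoforest structure to $B^{(k)}$ together with fixing everything outside exhibits $f_k$ as a forest on a cycle in the formal sense defined above (taking $A = B^{(k)}$ together with a basepoint $x_0$ on the cycle of that component; condition (1) holds because every vertex in a forest on a cycle is joined to the cycle, hence to $x_0$, by a directed path in one direction or the other, and condition (2) is built in). The maps $f_k$ are pairwise disjoint, because for $j \neq k$ and any $x \in X$, at least one of $B^{(j)}, B^{(k)}$ does not contain $x$, so $f_j(x) = x$ or $f_k(x) = x$. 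Then I would check $f = f_{k_1} f_{k_2} \cdots f_{k_r}$ (composition in any order, which is legitimate since disjoint functions commute — stated in the excerpt): evaluating the composite at $x$, all factors $f_{k_j}$ with $x \notin B^{(k_j)}$ fix $x$, and the unique remaining factor sends $x$ to $f(x)$, and one must also observe that this image $f(x)$ lies in the same component so the other factors continue to fix it.

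For uniqueness, suppose $f = g_1 \cdots g_s$ with the $g_\ell$ pairwise disjoint forests on a cycle. For each $\ell$ let $A_\ell \subseteq X$ be the "support" set of $g_\ell$, i.e.\ the set $A$ in the definition (equivalently the set of non-fixed points together with the cycle). Disjointness forces the $A_\ell$ to be pairwise disjoint. I would argue that each $A_\ell$ is a union of connected components of $G^u_f$ and in fact a single one: within $A_\ell$ the edges of $G^u_f$ coming from $g_\ell$ already connect all of $A_\ell$ (a forest on a cycle is connected), so $A_\ell$ lies in one component; conversely if an edge $\{x,f(x)\}$ of $G^u_f$ has $x \in A_\ell$ then $f(x) = g_\ell(x) \in A_\ell$, so no edge leaves $A_\ell$, forcing $A_\ell$ to be exactly a connected component. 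Hence $\{A_\ell\}$ is precisely the set of (non-singleton, or more carefully the nontrivial) connected components $\{B^{(k)}\}$, matching them up, and on its component $g_\ell$ agrees with $f$, hence with the corresponding $f_k$; outside it $g_\ell$ is the identity, hence $g_\ell = f_k$. This gives the bijection between the two factorizations, i.e.\ uniqueness up to order.

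The main obstacle I anticipate is purely bookkeeping rather than conceptual: making the correspondence between "connected component of $G^u_f$" and "forest on a cycle in the sense of the formal definition" fully precise, in particular handling the degenerate pieces — isolated fixed points of $f$ (singleton components, which contribute trivial identity-like factors one may wish to suppress) and transpositions (which produce a single edge but a cycle of length $2$ in $G^d_f$). One should decide a convention for whether fixed points are included as (trivial) forests on a cycle or discarded, and phrase the uniqueness statement accordingly so that "unique up to order" is literally true; once that convention is fixed, every step above is routine, exactly parallel to the classical permutation case whose proof structure via connected components is explicitly recalled in the Introduction.
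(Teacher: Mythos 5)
Your overall strategy---partition $X$ into the connected components of $G^u_f$, let $f_k$ agree with $f$ on the $k$-th component and fix everything else, and then match the factors of an arbitrary decomposition to components---is exactly the route the paper takes: its entire ``proof'' of Theorem~\ref{0} is the sentence preceding the statement, which appeals to the uniqueness of the decomposition of $G^u_f$ into connected components. Your existence argument is a correct and complete elaboration of that sketch.

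The uniqueness half, however, contains a step that fails. You assert that disjointness of the $g_\ell$ forces the supports $A_\ell$ to be pairwise disjoint, and that no edge of $G^u_f$ leaves $A_\ell$ because an edge $\{x,f(x)\}$ with $x\in A_\ell$ has $f(x)=g_\ell(x)\in A_\ell$. This only excludes edges \emph{emanating from} $A_\ell$; it says nothing about an edge $\{x,f(x)\}$ with $x\notin A_\ell$ but $f(x)\in A_\ell$. The obstruction is real, because the paper's notion of ``disjoint'' only constrains points that are actually moved, so two forests on a cycle may share a common \emph{fixed} point of $f$ in their supports (a shared root). Concretely, on $X=\{1,2,3\}$ let $f$ be the constant map to $1$. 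Then $f=m(2,1)\circ m(3,1)$ is a product of two disjoint forests on a cycle, while $f$ is itself a single forest on a cycle (take $A=X$, $x_0=1$); these are two genuinely different factorizations, so uniqueness as literally stated fails. The degenerate cases you flagged (isolated fixed points and transpositions) are not where the trouble lies; the new phenomenon, absent from the permutation case because $1$-cycles carry no trees there, is that several trees can hang on the same fixed point and be split among ``disjoint'' factors. To make your matching argument---and the theorem---work, maximality must be built in: each factor must agree with $f$ on an \emph{entire} connected component of $G^u_f$ and be the identity elsewhere (equivalently, each support must be closed under $f^{-1}$ as well as under $f$). This is evidently what the paper intends, but neither the paper nor your proof says so, and without it the key step ``$A_\ell$ is exactly a connected component'' is unprovable.
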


This corresponds to the decomposition into connected components of the undirected graph $G^u_f$.

\section{Transpositions and moves.}\label{sss}

Every cycle, hence every permutation, is a product of transpositions. Let's see the analog for any mapping $f\colon X\to X$.

 A {\em move} is a mapping $f\colon X\to X$ such that there exists $x_0\in X$ with $f(x_0)\ne x_0$ and $f(x)=x$ for every $x\in X$, $x\ne x_0$. We will denote the move that maps $x_0$ to $x_1$ and fixes all the elements $x\in X$, $x\ne x_0$, by $m(x_0,x_1)$.
 
If $f\colon X\to X$ is any mapping, and $G^u_f$ is its associated pseudoforest, we will call {\em length} of $f$ the number of edges of $G^u_f$. Hence mappings of length $1$ are transpositions and moves.

\begin{theorem}\label{1} Let $f\colon X\to X$ be a mapping, $n=|X|$ and $p=|f^n(X)|=|A_0|$. Then $f$
can be written as a product $$f=m_1m_2\dots m_{n-p}t_1t_2\dots t_s,$$ where:\begin{enumerate}\item $m_1,m_2,\dots, m_{n-p}$ are $n-p$ moves, one for each element of $X\setminus A_0$, which fix all the elements of $A_0=f^n(X)$, and \item $t_1,t_2,\dots ,t_s$ are $s\ge 0$ transpositions of elements of $A_0$.\end{enumerate}\end{theorem}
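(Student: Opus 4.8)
The plan is to argue by induction on $n-p=|X\setminus A_0|$, with the inductive statement strengthened slightly: I shall in addition require that each transposition $t_j$ move only elements $x$ with $f(x)\ne x$ (so that $t_j$ is a transposition of two elements of $\{x\in A_0:f(x)\ne x\}$). For the base case $n=p$, the map $f$ is a surjection of the finite set $X$ onto itself, hence a bijection, hence a permutation of $X=A_0$; writing a permutation as a product of transpositions supported on the elements it moves (e.g.\ via its disjoint-cycle decomposition) gives $f=t_1\cdots t_s$ with no moves and with the $t_j$ as required. So assume $n>p$. Then $f$ is not injective, so some $x_0\in X$ has no $f$-preimage; since $A_0=f^n(X)\subseteq f(X)$ we get $x_0\notin A_0$, and $f(x_0)=x_0$ would force $x_0\in f(X)$, so $f(x_0)\ne x_0$ and $m:=m(x_0,f(x_0))$ is a genuine move, which fixes every element of $A_0$ because the only point it moves is $x_0\notin A_0$.

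Define $g\colon X\to X$ by $g(x_0)=x_0$ and $g(x)=f(x)$ for $x\ne x_0$. First I would check that $f=m\circ g$ (using the convention $(hk)(x)=h(k(x))$): for $x\ne x_0$, $g(x)=f(x)\ne x_0$ because $x_0$ has no $f$-preimage, so $m$ fixes $f(x)$ and $(m\circ g)(x)=f(x)$; and $(m\circ g)(x_0)=m(x_0)=f(x_0)$. Next I would identify $A_0(g):=g^n(X)$. For any self-map $h$ of $X$ (with $n=|X|$) the set $h^n(X)$ is the eventual image of $h$, equivalently the union of the vertex sets of the directed cycles of $G^d_h$. The graph $G^d_g$ is obtained from $G^d_f$ by replacing the arrow $x_0\to f(x_0)$ with a loop at $x_0$; as $x_0$ has no $f$-preimage it lies on no cycle of $G^d_f$, so the cycles of $G^d_g$ are exactly those of $G^d_f$ together with the new loop. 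Hence $A_0(g)=A_0\cup\{x_0\}$, so $|X\setminus A_0(g)|=n-p-1$, and $x_0$ is a fixed point of $g$ lying in $A_0(g)$.

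Now apply the (strengthened) induction hypothesis to $g$: $g=m_1'\cdots m_{n-p-1}'\,t_1\cdots t_s$ where $m_1',\dots,m_{n-p-1}'$ are moves, one for each element of $X\setminus A_0(g)=(X\setminus A_0)\setminus\{x_0\}$, each fixing $A_0(g)\supseteq A_0$, and each $t_j$ is a transposition of two elements of $\{x\in A_0(g):g(x)\ne x\}$. Since $g$ fixes $x_0$ and agrees with $f$ elsewhere, $\{x\in A_0(g):g(x)\ne x\}=\{x\in A_0:f(x)\ne x\}\subseteq A_0$, so the $t_j$ already satisfy the strengthened requirement for $f$. Therefore
$$f=m\circ g=m\,m_1'\cdots m_{n-p-1}'\,t_1\cdots t_s$$
is a product of $n-p$ moves (namely $m$, corresponding to $x_0$, and the $m_i'$, corresponding to the remaining elements of $X\setminus A_0$), all fixing $A_0$, followed by $s$ transpositions of elements of $A_0$ — exactly the required form. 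The step I expect to need the most care is the compatibility of the transposition part with the induction, which is why I build the strengthening into the statement from the outset; everything else is a routine check of how $g$ sits inside $f$ and of the effect on the associated graph. (An alternative would be to first use Theorem~\ref{0} to reduce to a single forest on a cycle, or to enumerate the non-cycle vertices in an $f$-compatible order and exhibit the moves explicitly, but the induction on $n-p$ seems the cleanest.)
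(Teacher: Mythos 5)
Your proof is correct, and it takes a genuinely different route from the paper's. The paper argues directly: it invokes the stratified partition $\{A_0,A_1,\dots,A_m\}$ of Proposition~\ref{leila}, extends the bijection $f|_{A_0}^{A_0}$ to a permutation $\sigma$ of $X$ written as transpositions inside $A_0$, and then exhibits the whole product at once, listing one move $m(x,f(x))$ for each $x\in A_i$ and ordering the moves by stratum (those for $A_1$ applied first after $\sigma$, those for $A_m$ last), so that each move acts before any move that could disturb its target. You instead induct on $n-p$, peeling off a single move $m(x_0,f(x_0))$ for an element $x_0$ with no $f$-preimage; the identity $f=m\circ g$ hinges precisely on $x_0$ being such a ``leaf'' (otherwise $m$ would corrupt the image of a preimage of $x_0$), and your graph-theoretic verification that $A_0(g)=A_0\cup\{x_0\}$ correctly drives the induction. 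Your strengthening of the inductive statement (transpositions supported on $\{x\in A_0: f(x)\ne x\}$) is genuinely needed, since otherwise the inductive hypothesis would only give transpositions of elements of $A_0\cup\{x_0\}$; the paper avoids this issue because its transpositions come from $\sigma$, which is the identity off $A_0$ by construction. What each approach buys: the paper's construction is explicit and makes the ``one move per element of $X\setminus A_0$'' bookkeeping transparent, at the cost of a careful check that the chosen ordering of moves composes to $f$ (which the paper leaves implicit); your induction trades the global ordering argument for a one-step verification repeated, and needs only the characterization of $A_0=f^n(X)$ as the union of the directed cycles rather than the full stratification of Proposition~\ref{leila}.
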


\begin{proof} Let $\{\,A_i\mid i=0,1,2,\dots, m\,\}$ be the partition of $X$ as in Proposition~\ref{leila}. Then $A_0$ is invariant for $f$, and the restriction $f|_{A_0}^{A_0}\colon {A_0}\to {A_0}$ is a bijection. Extending this bijection to a bijection $\sigma\colon X\to X$ that is the identity on $X\setminus A_0$, we get an element $\sigma\in S_n$. Write $\sigma$ as a product $\sigma=t_1\circ t_2\circ \dots\circ  t_s$ of $s\ge0$ transpositions of elements of $A_0$. For every $i=1,2,\dots,m$, let $x_{i1},x_{i2},\dots,x_{in_i}$ be the elements of $A_i$, where $n_i$ is the cardinality of $A_i$. Then the required decomposition of $f$ is $$\begin{array}{l}f=m(x_{m1},f(x_{m1}))\circ m(x_{m2},f(x_{m2}))\circ \dots\circ m(x_{mn_m},f(x_{mn_m}))\circ \\ \qquad{}\circ 
m(x_{(m-1)1},f(x_{(m-1)1}))\circ \dots\circ m(x_{(m-1)n_{m-1}},f(x_{(m-1)n_{m-1}}))\circ \dots\circ \\ \qquad{}\circ 
m(x_{11},f(x_{11}))\circ m(x_{12},f(x_{12}))\circ{} \dots{}\circ m(x_{1n_1},f(x_{1n_1}))\circ t_1\circ t_2\circ\dots\circ t_s.\end{array}$$
\end{proof}

\begin{remark}{\rm \noindent (1) In the statement of Theorem~\ref{1}, the two composite mappings $m:=m_1m_2\dots m_{n-p}$ and $\sigma:=t_1t_2\dots t_s$ are completely determined by $f$. 
So $f=m\sigma$, where $m$ is a product of moves and $\sigma\in S_n$ is a permutation. Notice that, in general, a decomposition $f=g\tau$  of a mapping $f$ as a product $g$ of moves and a product $\tau$ of transpositions is not unique. For instance, $m(1,2)\circ (1\ 2)=m(1,2)$. Hence the additional conditions ``the moves, one for each element of $X\setminus A_0$, fix all the elements of $ A_0$, and the transpositions $t_1,t_2,\dots ,t_s$ act on elements of $A_0$'' in the statement of Theorem~\ref{1} are necessary.

\noindent (2) If we have two product decompositions $f=m\sigma$ and $f'=m'\sigma'$ like in the statement of Theorem \ref{1}, then $ff'=(m\sigma)(m'\sigma')=(m\sigma m'\sigma^{-1})(\sigma\sigma')$, and $m\sigma m'\sigma^{-1}$ is a product of moves, because the conjugate $\sigma m(i,
 j)\sigma^{-1}$ of any move $m(i,\ j)$ is the move $m(\sigma(i), \sigma(j))$, as is easily verified. But,  in the decomposition $ff'=(m\sigma m'\sigma^{-1})(\sigma\sigma')$, the additional conditions ``the moves, one for each element of $X\setminus A_0$, fix all the elements of $ A_0$, and the transpositions act on elements of $A_0$'' in the statement of Theorem~\ref{1} do not hold.}\end{remark}

\section{Idempotent mappings, products of moves, the sign $\sgn\colon M_n\to\{0,1,-1\}$, semidirect products.}

Any move is an idempotent mapping, that is, a mapping $f\colon X\to X$ such that $f^2=f$ (where $f^2=f\circ f$). Let us determine the structure of idempotent mappings. Let $f\colon X\to X$ be an idempotent mapping and $\{\,A_i\mid i=0,1,2,\dots, m\,\}$ the corresponding partition of $X$ according to Proposition~\ref{leila}. Then $f(A_i)\subseteq A_{i-1}$ for every $ i=1,2,\dots,m$. Suppose $m\ge 2$. Since $f^2(A_m)=f(A_m)$ is non-empty, $f^2(A_m)\subseteq A_{m-2}$, $f(A_m)\subseteq A_{m-1}$ and $A_{m-1},A_{m-2}$ are disjoint, we get a contradiction. Thus $m\le 1$. If $m=0$, then $f$ is an idempotent permutation, hence $f=1_{M_n}$, and its corresponding graph consists of $n$ disjoint vertices. If $m=1$, then we have the partition $\{A_0,A_1\}$ of $X$, $f(A_1)\subseteq A_0$, and, from $f^2(x)=f(x)$ for every $x\in X$, we get that $f$ is the identity on $A_0$. This shows that:

\begin{proposition} A mapping $f\colon X\to X$ is idempotent if and only if its graph is a forest of height at most $1$, if and only if there is a subset $A_0$ of $X$ such that $f(X\setminus A_0)\subseteq A_0$ and the restriction of $f$ to $A_0$ is the identity of $A_0$.\end{proposition}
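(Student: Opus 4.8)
The statement to prove is a three-way equivalence characterizing idempotent mappings $f\colon X\to X$: (i) $f^2=f$; (ii) the graph $G^u_f$ (equivalently $G^d_f$) is a forest of height at most $1$; (iii) there is a subset $A_0\subseteq X$ with $f(X\setminus A_0)\subseteq A_0$ and $f|_{A_0}=\mathrm{id}_{A_0}$.

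My plan is to prove the cycle of implications (i) $\Rightarrow$ (ii) $\Rightarrow$ (iii) $\Rightarrow$ (i), since all the analytical work has effectively been done in the paragraph preceding the statement. For (i) $\Rightarrow$ (ii): apply Proposition~\ref{leila} to get the partition $\{A_i\mid i=0,1,\dots,m\}$ with $f(A_0)=A_0$ and $f(A_i)\subseteq A_{i-1}$. The key computation is that $m\le 1$: if $m\ge 2$, pick $x\in A_m$; then $f(x)\in A_{m-1}$ but $f^2(x)=f(x)$ forces $f(x)\in f^2(A_m)=f(f(A_m))\subseteq f(A_{m-1})\subseteq A_{m-2}$, and since $A_{m-1}\cap A_{m-2}=\emptyset$ this is a contradiction (one must note $f(A_m)\ne\emptyset$ since $A_m\ne\emptyset$ by construction). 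Hence $m\le 1$, so the directed pseudoforest has height at most $1$, which (recalling that the height was defined via the same $m$) is exactly the assertion that $G^d_f$ (and hence $G^u_f$) is a forest of height at most $1$ — noting the absence of directed cycles of length $>1$ because $f(A_0)=A_0$ with $f^2=f$ forces $f|_{A_0}=\mathrm{id}$, so $A_0$ consists of fixed points and $G^u_f$ has no cycles at all.

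For (ii) $\Rightarrow$ (iii): if the graph has height at most $1$, then in the notation of Proposition~\ref{leila} we have $m\le 1$. If $m=0$ set $A_0=X$; if $m=1$ set $A_0$ to be the level-zero block. In either case $f(X\setminus A_0)=f(A_1)\subseteq A_0$ is immediate, and one must check $f|_{A_0}=\mathrm{id}_{A_0}$: since $f|_{A_0}$ is a bijection of the finite set $A_0$ and a forest (height-$0$ subgraph, i.e.\ no edges among vertices of $A_0$) cannot support a nontrivial cycle, $f|_{A_0}$ must be the identity. For (iii) $\Rightarrow$ (i): given such an $A_0$, take any $x\in X$. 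If $x\in A_0$ then $f(x)=x$ so $f^2(x)=f(x)$. If $x\notin A_0$ then $f(x)\in A_0$, hence $f^2(x)=f(f(x))=f(x)$ since $f$ fixes $A_0$ pointwise. Thus $f^2=f$.

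The only genuine subtlety — and the step I'd flag as the main obstacle — is making precise the bridge between the combinatorial condition ``the graph is a forest of height at most $1$'' and the set-theoretic data from Proposition~\ref{leila}, because ``forest'' forbids cycles entirely while ``height $\le 1$'' as defined refers to the integer $m$; one must observe that the two together are equivalent (for an idempotent, or more precisely for a map arising this way, the condition $m\le 1$ together with $f|_{A_0}=\mathrm{id}_{A_0}$ is what ``forest of height $\le 1$'' means, since a nontrivial cycle in $A_0$ would already be excluded). Everything else is the short direct verification sketched above, essentially a rephrasing of the discussion already given in the text immediately before the proposition.
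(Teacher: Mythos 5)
Your argument is correct and takes essentially the same route as the paper: the paper's proof is exactly your computation for (i)$\Rightarrow$(ii), namely invoking Proposition~\ref{leila}, deriving the contradiction from $\emptyset\ne f^2(A_m)=f(A_m)\subseteq A_{m-1}\cap A_{m-2}$ when $m\ge 2$, and then reading off $f|_{A_0}=\mathrm{id}_{A_0}$ from $f^2=f$ together with the bijectivity of $f$ on $A_0$. The paper leaves the remaining implications implicit, so your explicit (ii)$\Rightarrow$(iii)$\Rightarrow$(i), and your flagging that ``forest'' must be read as excluding all directed cycles of length $>1$ on $A_0$ (not merely cycles of the undirected graph), only add detail to the same argument.
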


\begin{figure}
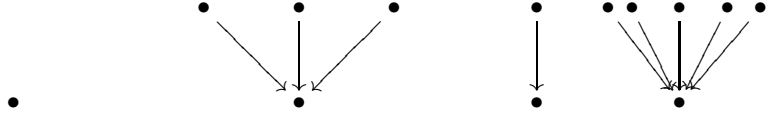
 
\[  \xygraph{
!{(-3,1) }*+{\bullet}="e"
!{(0,1)}*+{\bullet}="o"
!{(0,2) }*+{\bullet}="p"
!{(1,2) }*+{\bullet}="r"
!{(-1,2) }*+{\bullet}="r1"
!{(2.5,1)}*+{\bullet}="s"
!{(2.5,2)}*+{\bullet}="s2"
!{(4,1) }*+{\bullet}="v"
!{(4,2) }*+{\bullet}="w"
!{(3.5,2)}*+{\bullet}="x"
!{(3.25,2) }*+{\bullet}="y"
!{(4.5,2) }*+{\bullet}="z"
!{(4.85,2) }*+{\bullet}="z2"
"s2":"s"
"p":"o" 
"r":"o"
 "r1":"o"
 "w":"v"
"x":"v" "y":"v" "z":"v""z2":"v"
}  \]
\caption{\small An idempotent mapping: a forest of height at most 1.}\label{2}
\end{figure}

As a consequence, we have that in the product representation of Theorem~\ref{1} of an idempotent mapping $f\colon X\to X$ as a product $f=m_1m_2\dots m_{n-p}$ of moves (no transpositions are necessary), the moves $m_i$ commute pairwise. 

\medskip

Let $I_n$ be the subset of $M_n$ consisting of all non-injective mappings $X\to X$ (equivalently, all non-surjective mappings  $X\to X$). Clearly, for every $f,g\in M_n$, $fg\in I_n$ if and only if either $f\in I_n$ or $g\in I_n$, that is, $I_n$ is a completely prime two-sided ideal of the monoid $M_n$. By \cite{H}, every mapping in $I_n$ is a product of finitely many idempotent mappings. Since every idempotent mapping is a product of moves (Theorem~\ref{1}), it follows that the set of all moves in $M_n$ is a set of generators of the subsemigroup $I_n$ of $M_n$. 

\begin{remark}{\rm The fact that $I_n=M_n\setminus S_n$ is the subsemigroup $M_n$ generated by the set of all moves, implies that all elements $f\in I_n$ are product of moves, and all the other elements $f\in S_n=M_n\setminus I_n$ are product of transpositions.}\end{remark}

\medskip

Now the monoid $M_n$  is the disjoint union of its group of units $S_n=U(M_n)$ and the completely prime two-sided ideal $I_n$. 
As a consequence,  it is possible to define the sign of any mapping $f\colon X\to X$, which will be one of the three integers $1,-1$ and $0$. 
There is a monoid morphism $\sgn\colon M_n\to\{0,1,-1\}$ of the monoid $M_n$ into the multiplicative monoid $\{0,1,-1\}$.  It is defined, for every $f\in M_n$, by
\begin{equation*}
    \sgn(f)=
    \begin{cases}
      1&   \text{if }f\in A_n, \\
      -1&   \text{if }f\in S_n\setminus A_n, \\
      0&  \text{if }f\in I_n.
    \end{cases}
\end{equation*}

 Hence, for $n\ge 2$, there is an equivalence relation $\sim$ on the monoid $M_n$, compatible with the operation of $M_n$, whose equivalence classes are the three classes $A_n,\ S_n\setminus A_n$ and $I_n$, respectively.

\bigskip

For the symmetric group $S_n$, the sign $\sgn\colon S_n\to \{1,-1\}$ is a surjective group morphism  when $n\ge 2$, so that $S_n$ has a normal subgroup $A_n$, the alternating subgroup, which is a subgroup of index $2$ of $S_n$. Fix any transposition in $S_n$, for instance the transposition $(1\ 2)$. Then there is an endomorphism of $S_n$ that maps all even permutations of $S_n$ to the identity of $X$ and maps all odd permutations of $S_n$ to  the transposition $(1\ 2)$. This endomorphism of $S_n$ is an idempotent endomorphism of $S_n$. For groups, the existence of such an idempotent endomorphism is sufficient to have a splitting as a semidirect product: if $G$ is any group and $\varphi\colon G\to G$ is any idempotent endomorphism of $G$, then $G$ is the semidirect product of the kernel and the image of $\varphi$. 
Therefore, for $n\ge 2$, we have that $S_n=A_n\rtimes \langle (1\ 2)\rangle$, i.e., $S_n$ splits as a semidirect product of its normal subgroups $A_n$ and the subgroup $\langle (1\ 2)\rangle$ of order $2$. The group $\langle (1\ 2)\rangle$ acts on $A_n$ via conjugation. 

Let us see how this generalizes to the monoid $M_n$. 
The completely prime ideal $I_n$ of $M_n$ is a subsemigroup without identity of $M_n$. Let $I'_n$ be the submonoid of $M_n$ obtained from $I_n$ adjoining to it the identity of $M_n$, i.e., $I'_n:=I_n\cup\{1_{M_n}\}$ Then $I'_n$ is the submonoid of $M_n$ generated by the set of all moves, $M_n=S_n\cup I'_n$ and $S_n\cap I'_n= \{1_{M_n}\}$.

There is an action of $S_n$ on the normal submonoid $I'_n$ of $M_n$, i.e., a group homomorphism $S_n\to\Aut(I'_n)$ that maps any $\tau\in S_n$ to the inner automorphism $g\in I'_n\mapsto \tau g\tau^{-1}$ of the monoid $I'_n$. Hence it is possible to construct the semidirect product $ I_n'\rtimes S_n$ \cite{U}. There is a canonical surjective monoid homomorphism $$\psi\colon I_n'\rtimes S_n\to M_n, \quad\psi\colon (g,\tau)\mapsto g\tau,$$ because multiplication in the semidirect product is such that $$(g,\tau)(g',\tau')=(g\tau g'\tau^{-1}, \tau\tau').$$ If we consider the inverse image via $\psi$ of the elements of $M_n$, we find that the inverse image of an element $f\in M_n$ is $\varphi^{-1}(f)=\{\,(g,\tau)\mid g\in I_n', \tau\in S_n, g\tau=f\,\}$, which has cardinality $1$ if $f\in S_n$ and has cardinality $n!$ if $f\in M_n\setminus S_n$.

Notice that there is no idempotent endomorphism $\varphi$ of $M_n$ whose image is $S_n$. This is because a monoid morphism maps idempotents to idempotents. All moves are idempotents of $M_n$, and the only idempotent in $S_n$ is the identity. Thus we would have that $\varphi$ should map all moves of $M_n$ to the identity of $S_n$ and fix the elements of $S_n$. The equality $m(1,2)\circ (1\ 2)=m(1,2)$ seen in Section~\ref{sss} shows that this is not possible. 

The monoid $I_n'\rtimes S_n$ contains as a submonoid $I_n'\rtimes A_n$, on which the group $\langle(1\ 2)\rangle$ acts via conjugation. Hence it is possible to construct the semidirect product $(I'_n\rtimes A_n)\rtimes\langle(1\ 2)\rangle$, and there is a canonical epimorphism $p\colon (I'_n\rtimes A_n)\rtimes\langle(1\ 2)\rangle\to M_n$. The composite morphism $\sgn \circ p\colon (I'_n\rtimes A_n)\rtimes\langle(1\ 2)\rangle\to \{1,-1,0\}$ is the product $(\iota\rtimes\alpha)\rtimes\tau$ of the three morphisms:\enumerate\item $\iota\colon I'_n\to \{1,-1,0\}$ that maps all elements of $I_n$ to $0$ and the identity of $I'_n$ to $1$. \item  $\alpha\colon A_n\to \{1,-1,0\}$ that maps all elements of $A_n$ to $1$. And \item the injective monoid morphism $\tau\colon \langle(1\ 2)\rangle\to \{1,-1,0\}$ that maps the transposition $(1\ 2)$ to $-1$ and the identity of $\langle(1\ 2)\rangle$ to $1$: \[\xymatrix{ 
(I'_n\rtimes A_n)\rtimes\langle(1\ 2)\rangle \ar@{->}[r]^{\ \ \ \ \ \ \ \ \ p} \ar[dr]_{(\iota\rtimes\alpha)\rtimes\tau} & M_n \ar[d]^{\sgn} \\ 
 & {\phantom{.}}\{1,-1,0\}.
}\]

\section{Forests, cycles, and trivial mappings.}\label{pre}

\begin{proposition} The following conditions are equivalent for a mapping $f\colon X\to X$ with $|X|=n$:

{\rm (a)} The graph of $f$ is a forest.

{\rm (b)} $f^n=f^{n+1}$. \end{proposition}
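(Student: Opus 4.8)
The plan is to prove the equivalence by relating both conditions to the partition $\{A_i\mid i=0,1,\dots,m\}$ of Proposition~\ref{leila}, where $A_0=f^t(X)$ is the invariant block on which $f$ restricts to a permutation, and $m$ is the height of the directed pseudoforest $G^d_f$. The key structural observation is that the graph $G^u_f$ of $f$ is a forest if and only if its connected components contain no cycle, and since each component corresponds to a ``forest on a cycle'' (Theorem~\ref{0}), this happens exactly when every cycle appearing is a loop in $G^d_f$, i.e. a fixed point; equivalently, the restriction of $f$ to $A_0$ is the identity. So I would first establish the bridging lemma: \emph{$G^u_f$ is a forest $\iff f|_{A_0}=\mathrm{id}_{A_0}$}. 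One direction is immediate since a non-identity permutation of $A_0$ forces a directed cycle of length $\ge 2$ in $G^d_f$, which is a cycle in $G^u_f$; for the converse, if $f|_{A_0}$ is the identity then each connected component is a rooted tree hanging off a single fixed point (using that outdegree is $1$ and there is a unique cycle per component, here of length~$1$), so $G^u_f$ has no cycle.

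Next I would analyze condition (b). Since $f^k(X)\supseteq f^{k+1}(X)$ stabilizes at $A_0$ no later than step $t\le n-1$ (the chain $X\supsetneq f(X)\supsetneq\cdots$ strictly decreases until it stabilizes, and $|X|=n$), we always have $f^n(X)=A_0$ and more generally $f^k(X)=A_0$ for all $k\ge n-1$. Now $f^n=f^{n+1}$ as maps on $X$ means $f^{n+1}(x)=f^n(x)$ for every $x$. Because $f^n(x)\in A_0$ for all $x$, and $f$ permutes $A_0$, the condition $f^{n+1}=f^n$ is equivalent to saying that $f$ fixes every element of $f^n(X)=A_0$, i.e. $f|_{A_0}=\mathrm{id}_{A_0}$. (If $f|_{A_0}$ is the identity, then $f^{n+1}(x)=f(f^n(x))=f^n(x)$ for all $x$; conversely, applying $f^{n+1}=f^n$ to any $a\in A_0$ and using that $f|_{A_0}$ is a bijection, hence $a=f^n(b)$ for some $b$, but more directly: for $a\in A_0$, $a=f^n(c)$ for suitable $c$ since $f^n(X)=A_0$, so $f(a)=f^{n+1}(c)=f^n(c)=a$.) Combining this with the bridging lemma gives (a) $\iff f|_{A_0}=\mathrm{id}_{A_0}$ $\iff$ (b).

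Alternatively, and perhaps more cleanly, I would give a direct argument for (b) without reference to $A_0$: show $f^n=f^{n+1}$ holds iff every point of $X$ is eventually mapped into the fixed-point set of $f$, again using that the eventual image $f^n(X)$ coincides with the stable set on which $f$ acts bijectively, so that $f^n=f^{n+1}$ forces this action to be trivial. Then (a) $\Rightarrow$ (b): if $G^u_f$ is a forest, each component is a tree rooted at a fixed point of height at most... well, height at most $n-1$, and every vertex reaches the root within $n-1$ steps of $f$, so $f^{n-1}(X)$ consists of fixed points and $f^n=f^{n-1}=f^{n+1}$. And (b) $\Rightarrow$ (a): if $G^u_f$ contained a cycle, that cycle sits inside some $A_0$-type component and corresponds to a directed cycle of length $\ell\ge 2$ in $G^d_f$; choosing a vertex $x$ on it, $f^n(x)$ and $f^{n+1}(x)$ are distinct vertices of the cycle (since $\ell\nmid 1$), contradicting (b).

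The main obstacle, I expect, is nailing down the bound that makes $n$ the correct exponent rather than some larger number: one must argue that the descending chain $X\supseteq f(X)\supseteq\cdots$ stabilizes within $n-1$ steps (each strict drop loses at least one element, and it cannot drop below a set of size $\ge 1$), and equally that in a forest component every vertex lies at distance at most $n-1$ from its root, so that $f^{n-1}$ already lands everything in the fixed-point set — everything else is bookkeeping with the partition from Proposition~\ref{leila} and the connected-component description from Section~\ref{XXX}. Care is also needed to phrase ``$f^n=f^{n+1}$'' as an equality of functions on all of $X$ (not merely on the image) and to use the bijectivity of $f|_{A_0}$ correctly when deducing that this restriction is the identity.
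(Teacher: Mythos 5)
Your argument is correct in substance and rests on the same underlying idea as the paper's proof: both conditions are reduced to the assertion that every directed cycle of $G^d_f$ is a loop, i.e.\ that $f$ restricts to the identity on its eventual image. The difference is bookkeeping. The paper argues pointwise: for each $x$ the trajectory $x,f(x),f^2(x),\dots$ eventually enters a cycle, the component of $x$ is acyclic exactly when that cycle is a loop ($t_1=t_0+1$ in the notation of Section~\ref{XXX}), so the graph is a forest iff for every $x$ there is $t_x\le n$ with $f^{t_x}(x)=f^{t_x+1}(x)$, which is condition (b). You instead pass through the invariant block $A_0=f^n(X)$ of Proposition~\ref{leila} and prove the two equivalences ``forest $\iff f|_{A_0}=\mathrm{id}_{A_0}$'' and ``$f^n=f^{n+1}\iff f|_{A_0}=\mathrm{id}_{A_0}$''. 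Your second equivalence is done carefully: the observation that the chain $X\supseteq f(X)\supseteq\cdots$ stabilizes within $n-1$ steps, so that $f^n(X)=A_0$, and the computation $f(a)=f^{n+1}(c)=f^n(c)=a$ for $a=f^n(c)\in A_0$, are exactly what is needed. Your ``alternative direct argument'' in the third paragraph is essentially the paper's proof.

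One step deserves a caveat, though it is a defect of the statement (and of the paper's own proof) rather than of your argument specifically: you assert that a directed cycle of length $\ge 2$ in $G^d_f$ yields a cycle in $G^u_f$. For length exactly $2$ this is false under the paper's conventions: a transposition $i\leftrightarrow j$ inside $A_0$ contributes the single undirected edge $\{i,j\}$ (Section~\ref{XXX} notes explicitly that transpositions are one of the two reasons $G^u_f$ has fewer than $n$ edges), so $G^u_f$ can be a forest while $f^n\ne f^{n+1}$; for instance $f=(1\ 2)$ on $X=\{1,2\}$ has $G^u_f$ equal to a single edge, yet $f^2=\iota_X\ne f^3$. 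The paper's proof makes the identical elision when it equates ``the connected component of $x_0$ has no cycles'' with ``$t_1=t_0+1$''. The equivalence is correct if ``the graph of $f$ is a forest'' is read as ``$G^d_f$ has no directed cycles of length $\ge 2$'' (equivalently, $f|_{A_0}=\mathrm{id}_{A_0}$), which is clearly the intended meaning; under the literal reading with the simple undirected graph $G^u_f$, both your proof and the paper's would need transpositions to be excluded separately.
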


\begin{proof} The graph of $f$ is a forest if and only if every connected component of the graph of $f$ is a tree, if and only if, for every $x_0\in X$, the connected component of $x_0$ has no cycles. In the notation of Section~\ref{XXX}, this means that $t_1=t_0+1$ for every vertex $x_0$ in $X$. Hence, equivalently, the graph of $f$ is a forest if and only for every $x\in X$ there is a $t_x\ge0$ such that $f^{t_x}(x)=f^{t_x+1}(x)$. Without loss of generality, we can suppose $t_x\le n$ because $|X|=n$. Thus the graph of $f$ is a forest if and only $f^n(x)=f^{n+1}(x)$ for every $x\in X$, that is, $f^n=f^{n+1}$.\end{proof}

\begin{proposition} The following conditions are equivalent for a mapping $f\colon X\to X$ with $|X|=n$:

{\rm (a)} $f$ is a bijection.

{\rm (b)} $f^{n!}$ is the identity $\iota_X\colon X\to X$. \end{proposition}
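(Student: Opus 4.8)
The plan is to prove both implications directly, using the structural results already available. For (a) $\Rightarrow$ (b): if $f$ is a bijection, then $f\in S_n$, and $S_n$ is a finite group of order $n!$, so $f^{n!}$ is the identity by Lagrange's theorem (the order of $f$ divides $|S_n|=n!$). More elementarily, one can write $f$ as a product of disjoint cycles (by Theorem~\ref{0}, since a bijective forest on a cycle is just a cycle); each cycle of length $\ell$ satisfies $\ell\mid n$ in the sense that $\ell\le n$, hence $\ell\mid n!$, so the $n!$-th power of each cycle is the identity, and since disjoint cycles commute the $n!$-th power of $f$ is the identity.

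For (b) $\Rightarrow$ (a), I would argue by contrapositive: suppose $f$ is not a bijection, so $f\in I_n$, i.e., $f$ is not surjective. Then $|f(X)|<n$, and since $f^{n!}(X)\subseteq f(X)\subsetneq X$, the map $f^{n!}$ is not surjective, hence not the identity. This is the quickest route. Alternatively, and more in the spirit of the paper, one can invoke Proposition~\ref{leila}: if $f$ is not a bijection then the partition $\{A_i\mid i=0,\dots,m\}$ has $m\ge 1$, so $A_1\ne\emptyset$; picking $x\in A_1$, all powers $f^k(x)$ for $k\ge 1$ lie in $A_0$, which is disjoint from $A_1$, so $f^{n!}(x)\ne x$ and $f^{n!}\ne\iota_X$.

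I expect no serious obstacle here: both directions are short. The only point requiring a little care is making precise, in the elementary version of (a) $\Rightarrow$ (b), why a cycle of length $\ell$ appearing in the decomposition of a permutation of the $n$-element set $X$ satisfies $\ell\le n$ and hence $\ell\mid n!$ — this is immediate since the cycle's support is a subset of $X$. If one instead uses Lagrange's theorem the argument is completely transparent. I would present the Lagrange-based proof as the main line, perhaps remarking that it parallels the classical fact for $S_n$, and phrase (b) $\Rightarrow$ (a) via the non-surjectivity observation, which ties the statement cleanly to the ideal $I_n=M_n\setminus S_n$ discussed earlier.

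\begin{proof}
(a) $\Rightarrow$ (b). If $f$ is a bijection, then $f$ is an element of the finite group $S_n$, whose order is $n!$. By Lagrange's theorem, the order of $f$ in $S_n$ divides $n!$, so $f^{n!}=\iota_X$. (Equivalently: write $f$ as a product of disjoint cycles by Theorem~\ref{0}; a cycle occurring in this decomposition has length $\ell\le n$, hence $\ell$ divides $n!$, so its $n!$-th power is the identity; since disjoint cycles commute, $f^{n!}=\iota_X$.)

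(b) $\Rightarrow$ (a). We prove the contrapositive. Suppose $f$ is not a bijection. Since $X$ is finite, $f$ is then not surjective, so $f(X)\subsetneq X$. Consequently $f^{n!}(X)\subseteq f(X)\subsetneq X$, so $f^{n!}$ is not surjective; in particular $f^{n!}\ne\iota_X$.
\end{proof}
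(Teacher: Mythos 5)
Your proof is correct. For (a) $\Rightarrow$ (b) you argue exactly as the paper does: $f$ lies in the group of units $S_n$, which has order $n!$, so $f^{n!}=\iota_X$ by Lagrange. For (b) $\Rightarrow$ (a) your route differs slightly from the paper's: you take the contrapositive and observe that a non-bijective $f$ on a finite set is non-surjective, whence $f^{n!}(X)\subseteq f(X)\subsetneq X$ and $f^{n!}\ne\iota_X$; the paper instead argues directly that for $n>1$ the equation $f^{n!}=\iota_X$ exhibits $f^{n!-1}$ as a two-sided inverse of $f$, handling $n=1$ separately. Your version buys a uniform argument with no case split on $n$, and ties the statement to the ideal $I_n$ of non-surjective maps; the paper's version is equally short and produces the inverse explicitly. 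Either is perfectly acceptable.
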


\begin{proof} If $f$ is a bijection, it is in the group of units $S_n$ of the monoid $M_n$, and $S_n$ has order $n!$, so that $f^{n!}$ is the identity $\iota_X\colon X\to X$ of the group~$S_n$.

Conversely, if $n=1$, then $f$ is certainly a bijection $X\to X$. If $n>1$, then $n!>1$, so that $f^{n!}=\iota_X$ implies that $f^{n!-1}$ is the inverse of $f$. Thus $f$ is a bijection.\end{proof}

Let $\Cal M$ be the category whose objects are all pairs $(X,f)$, where $X=\{1,2,3,\dots,$ $n\}$ for some $n\ge1$ and $f\colon X\to X$ is a mapping. Hence $\Cal M$ will be a small category with countably many objects. A morphism $g\colon (X,f)\to (X',f')$ in $\Cal M$ is any mapping $g\colon X\to X'$ for which the diagram  \begin{equation}
\xymatrix{ 
X \ar[r]^{g} \ar[d]_{f} & X' \ar[d]^{f'} \\ 
X\ar[r]_{g} & X'
}\label{homo}
\end{equation} commutes.

\bigskip

\begin{remark} {\rm Our category $\Cal M$ can also be seen from the point of view of Universal Algebra. It is equivalent to the category  (variety) of all finite algebras $(X,f)$ with one unary operation $f$ and no axioms. The morphisms in the category $\Cal M$ are exactly the homomorphisms in the sense of Universal Algebra. The product decomposition of $f$ as a product of disjoint forests on a cycle corresponds to the coproduct decomposition in this category $\Cal M$ as a coproduct of indecomposable algebras. A {\em congruence} on $(X,f)$, in the sense of Universal Algebra, is an equivalence relation $\sim$ on the set $X$ such that, for all $x,y\in X$, $x\sim y$ implies $f(x)\sim f(y)$.}\end{remark}

\bigskip

Now let  $\Cal C$ be the full subcategory of $\Cal M$ whose objects are the pairs $(X,f)$ with $f\colon X\to X$ a bijection. Let $\Cal F$ be the full subcategory of $\Cal M$ whose objects are the pairs $(X,f)$ where $f$ is a mapping whose graph is a forest. (Here, $\Cal C$ stands for cycles and $\Cal F$ stands for forests, or torsion-free objects, as we will see.) Clearly, an object of $\Cal M$ is an object both in $\Cal C$ and in $\Cal F$ if and only if it is of the form $(X,\iota_X)$, where $\iota_X\colon X\to X$ is the identity mapping. We will call these objects $(X,\iota_X)$ the {\em trivial objects} of~$\Cal M$. Let  $\Triv$ be the full subcategory of $\Cal M$ whose objects are all trivial objects $(X,\iota_X)$.

Call a morphism $g\colon (X,f)\to (X',f')$ in $\Cal M$ {\em trival} if it factors through a trivial object. That is, if there exists a trivial object $(Y,\iota_Y)$ and morphisms $h\colon (X,f)\to (Y,\iota_Y)$ and $\ell\colon (Y,\iota_Y)\to (X',f')$ in $\Cal M$ such that $g=\ell h$.

\begin{Lemma}\label{MMM} Let $g\colon (X,f)\to (X',f')$ be a morphism in $\Cal M$. Then $g$ is a trivial morphism in $\Cal M$ if and only if $g$ is constant on the connected components of $G^u_f$ and the image of $g$ consists of elements of $X'$ fixed by~$f'$. \end{Lemma}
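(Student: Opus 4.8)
The plan is to prove both implications directly from the definition of trivial morphism, using the description of connected components of $G^u_f$ from Section~\ref{XXX}.

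First I would establish the ``only if'' direction. Suppose $g = \ell h$ with $h\colon (X,f)\to (Y,\iota_Y)$ and $\ell\colon (Y,\iota_Y)\to (X',f')$ morphisms in $\Cal M$. The commuting square \eqref{homo} for $h$ forces $\iota_Y\circ h = h\circ f$, i.e.\ $h(f(x)) = h(x)$ for every $x\in X$; hence $h$ takes the same value on $x$ and $f(x)$, and therefore (since a connected component of $G^u_f$ is the closure of a point under $f$ and $f^{-1}$, as recalled in Section~\ref{XXX}) $h$ is constant on each connected component of $G^u_f$. Consequently $g=\ell h$ is also constant on each connected component. Moreover, the commuting square for $\ell$ gives $f'(\ell(y)) = \ell(\iota_Y(y)) = \ell(y)$ for every $y\in Y$, so every element of the image of $\ell$ — in particular every element of the image of $g$ — is fixed by $f'$.

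For the ``if'' direction, suppose $g$ is constant on each connected component of $G^u_f$ and $\im g$ consists of fixed points of $f'$. I would factor $g$ through the trivial object $(Y,\iota_Y)$ with $Y := \im g \subseteq X'$ (or, to stay inside the stated category where objects are sets $\{1,\dots,k\}$, through a trivial object on a set of the appropriate cardinality, identified with $\im g$ via a bijection). Define $h\colon X\to Y$ by $h := g$ viewed as a map onto its image, and let $\ell\colon Y\to X'$ be the inclusion. Then $g = \ell h$ on underlying sets, so it remains to check $h$ and $\ell$ are morphisms in $\Cal M$. For $\ell$: since each $y\in Y=\im g$ is fixed by $f'$, we have $f'(\ell(y)) = f'(y) = y = \ell(\iota_Y(y))$, so the square commutes. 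For $h$: we must verify $\iota_Y(h(x)) = h(f(x))$, i.e.\ $h(x) = h(f(x))$ for all $x\in X$; but $x$ and $f(x)$ lie in the same connected component of $G^u_f$ (they are joined by the edge $\{x,f(x)\}$ when $f(x)\ne x$, and equal otherwise), and $h = g$ is constant on components, so this holds. Hence $g$ factors through a trivial object and is a trivial morphism.

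The only genuine subtlety — not really an obstacle — is the bookkeeping that $\Cal M$ was defined with objects indexed by sets $\{1,2,\dots,n\}$ rather than arbitrary finite sets, so strictly one should replace $\im g$ by an abstract set of the same cardinality and transport the maps along a chosen bijection; since the trivial object $(Y,\iota_Y)$ carries only the identity, any such choice works and the argument is unaffected. Everything else is a routine unwinding of the commutativity of \eqref{homo} together with the already-established fact that connected components of $G^u_f$ are exactly the orbits under ``taking images and inverse images via $f$.''
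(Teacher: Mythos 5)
Your proof is correct and follows essentially the same route as the paper's: the ``only if'' direction unwinds the two commuting squares to get $g(x)=g(f(x))$ and $f'(g(x))=g(x)$, and the ``if'' direction factors $g$ through its image equipped with the identity map, exactly as in the paper's factorization $g=\varepsilon\circ g|^{g(X)}$. Your remark about transporting $\im g$ to a set of the form $\{1,\dots,k\}$ is a legitimate bookkeeping point that the paper silently glosses over, but it does not change the argument.
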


\begin{proof} Let $g$ be a trivial morphism in $\Cal M$, so that there is a commutative diagram  \begin{equation}
\xymatrix{ 
X \ar[r]^{h} \ar[d]_{f} & Y \ar[r]^{\ell} \ar@{=}[d] & X' \ar[d]^{f'} \\ 
X\ar[r]_{h} & Y  \ar[r]_{\ell} & X'    ,
}\label{PPP}
\end{equation} where $g=\ell h$. In order to show that $g$ is constant on the connected components of $G^u_f$, it suffices to prove that if $x\in X$, then $g(x)=g(f(x))$. Now the commutativity of the square on the left of diagram~(\ref{PPP}) yields that $h(x)=h(f(x))$, so that $g(x)=\ell (h(x))=\ell(h(f(x)))=g(f(x))$. In order to prove that the image of $g$ consists of elements of $X'$ fixed by $f'$, we must show that, for every $x\in X$, $g(x)=f'(g(x))$. The commutativity of the square on the right of diagram~(\ref{PPP}) tells us that $\ell=f'\circ \ell$. Hence, for every $x\in X$, $f'( g(x))=f'(\ell(h(x)))=\ell(h(x))=g(x)$.

Conversely, let $g\colon X\to X'$ be a mapping that is constant on the connected components of $G^u_f$ and whose image consists of elements of $X'$ fixed by $f'$. For every $x\in X$, the vertices $x$ and $f(x)$ are in the same connected component of $G^u_f$, so that $g(x)=g(f(x))$. Since the image of $g$ consists of elements of $X'$ fixed by $f'$, we get that $f'(g(x))=g(x)$. This proves that $g=gf$ and $f'g=g$. Now $g$ factors as $g=\varepsilon\circ g|^{g(X)}$, where $\varepsilon \colon g(X)\hookrightarrow X'$ is the inclusion and $g|^{g(X)}\colon X\to g(X)$ is the corestriction of $g$ to $g(X)$. Thus the equalities $g=gf$ and $f'g=g$ can be rewritten as 
$\varepsilon g|^{g(X)}=\varepsilon g|^{g(X)}f$ and $f'\varepsilon g|^{g(X)}=\varepsilon g|^{g(X)}$, from which 
$g|^{g(X)}=g|^{g(X)}f$ and $f'\varepsilon =\varepsilon $ because $\varepsilon$ is injective and $g|^{g(X)}$ is surjective. Thus the diagram  \[
\xymatrix{ 
X \ar[r]^{g|^{g(X)}} \ar[d]_{f} & g(X) \ar[r]^{\varepsilon} \ar@{=}[d] & X' \ar[d]^{f'} \\ 
X\ar[r]_{g|^{g(X)}} & g(X) \ar[r]_{\varepsilon} & X'    .
}
\] is commutative. Hence $\varepsilon$ and $g|^{g(X)}$ are morphisms in $\Cal M$, and $g=\varepsilon \circ g|^{g(X)}$ is a factorization of $g$ through the trivial object $(g(X),\iota_{g(X)})$.
\end{proof} 

We conclude this section with a lemma that will be often useful in the sequel.

\begin{Lemma}\label{arrows} Let $g\colon (X,f)\to (X',f')$ be a morphism in $\Cal M$. Then:\begin{enumerate}
\item The image of an arrow in $G^d_f$ is an arrow in $G^d_{f'}$.
\item The image of a directed path (a directed cycle) in $G^d_f$ is a directed path (a directed cycle) in $G^d_{f'}$.
\item The image of a connected component of $G^u_f$ is contained in a connected component of $G^u_{f'}$.\end{enumerate}\end{Lemma}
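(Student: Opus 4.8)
The plan is to unwind each assertion directly from the commutativity of diagram~(\ref{homo}), which says precisely that $f'\circ g=g\circ f$, i.e.\ $g(f(x))=f'(g(x))$ for every $x\in X$. This single identity is the engine for all three parts.

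\smallskip

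For part~(1), an arrow of $G^d_f$ is a pair $(x,f(x))$ with $x\in X$. Its image under $g$ is the pair $(g(x),g(f(x)))$. Applying the identity $g(f(x))=f'(g(x))$, this equals $(g(x),f'(g(x)))$, which is by definition an arrow of $G^d_{f'}$. (If $g$ is not injective the image pair may be a loop $g(x)=g(f(x))$, but loops are allowed in directed graphs, so this is still an arrow of $G^d_{f'}$; I would mention this point explicitly since the paper's convention permits loops in directed graphs.)

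\smallskip

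For part~(2), a directed path in $G^d_f$ is a sequence $x_0,x_1,\dots,x_k$ with $x_{i+1}=f(x_i)$, i.e.\ $x_i=f^{i}(x_0)$; its image is $g(x_0),g(x_1),\dots,g(x_k)$. Since by part~(1) each consecutive pair is an arrow of $G^d_{f'}$ — more explicitly, induction on $i$ using $g(x_{i+1})=g(f(x_i))=f'(g(x_i))$ gives $g(x_i)=(f')^{i}(g(x_0))$ — the image is a directed path in $G^d_{f'}$. If moreover $x_0,\dots,x_k$ is a directed cycle, meaning $f(x_k)=x_0$ (with the $x_i$ distinct), then $f'(g(x_k))=g(f(x_k))=g(x_0)$, so the image sequence closes up and, after deleting any repetitions, is a directed cycle in $G^d_{f'}$ (a loop $g(x_0)=\dots=g(x_k)$ being the degenerate case of a length-one cycle). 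For part~(3), recall from Section~\ref{XXX} that the connected component of a vertex $x_0$ in $G^u_f$ is the closure of $\{x_0\}$ under taking images and preimages via $f$. Since $g$ carries edges $\{x,f(x)\}$ of $G^u_f$ to pairs $\{g(x),f'(g(x))\}$, each such pair is either an edge of $G^u_{f'}$ or a single vertex (when $g(x)=f'(g(x))$); in either case $g(x)$ and $g(f(x))$ lie in the same connected component of $G^u_{f'}$. Hence $g$ maps any edge-path in $G^u_f$ to a walk in $G^u_{f'}$ (with possibly some repeated vertices), so the image of a connected component of $G^u_f$ — any two of whose vertices are joined by such a path — is contained in a single connected component of $G^u_{f'}$.

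\smallskip

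None of the three steps presents a genuine obstacle; the only subtlety worth flagging is the bookkeeping around loops and repeated vertices caused by $g$ not being injective, which is why I am careful to phrase the cycle and connected-component statements as ``a directed cycle (possibly degenerating to a loop)'' and ``contained in a connected component'' rather than claiming bijective images.
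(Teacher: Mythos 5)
Your proof is correct and follows essentially the same route as the paper's: part (1) is exactly the commutativity identity $g(f(x))=f'(g(x))$, and parts (2) and (3) are the immediate consequences the paper leaves implicit, which you simply spell out (including the harmless degeneracies from loops and non-injectivity).
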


\begin{proof} (a) We must show that for an arbitrary arrow $x\to f(x)$ of $G^d_f$, the arrow $g(x)\to g(f(x))$ is in $G^d_{f'}$, i.e., that $g(f(x))=f'(g(x))$, which holds because of the commutativity of digram~(\ref{homo}). Statements (b) and (c) follow immediately from (a).\end{proof}

\begin{corollary}\label{trivial} If $(X,f)$ and $(X',f')$ are objects of $\Cal M$, where $f$ is a bijection and the graph of $f'$ is a forest, then every morphism $g\colon (X,f)\to (X',f')$ is trivial.\end{corollary}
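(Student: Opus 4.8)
The plan is to reduce everything to Lemma~\ref{MMM}: a morphism $g$ is trivial precisely when it is constant on the connected components of $G^u_f$ and its image consists of elements of $X'$ fixed by $f'$. So the whole argument amounts to verifying these two conditions under the hypotheses that $f$ is a bijection and the graph of $f'$ is a forest. First I would record the relevant structural facts: since $f$ is a bijection of the finite set $X$, every vertex of $G^d_f$ has indegree and outdegree $1$, so $G^d_f$ is a disjoint union of directed cycles; consequently (as in the discussion of Section~\ref{XXX}) each connected component of $G^u_f$ is exactly the $f$-orbit $\{x, f(x), f^2(x),\dots\}$ of any of its vertices, and for every $x\in X$ there is some $k\ge 1$ with $f^k(x)=x$. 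On the other side, since the graph $G^u_{f'}$ is a forest it contains no cycles, hence $G^d_{f'}$ contains no simple directed cycle of length $\ge 2$; its only directed cycles are the loops at the fixed points of $f'$.

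The core step is then short. Fix $x\in X$ and choose $k\ge 1$ with $f^k(x)=x$. Since $g$ is a morphism in $\Cal M$ we have $g\circ f = f'\circ g$, hence $g\circ f^{\,j} = (f')^{\,j}\circ g$ for all $j\ge 0$, and in particular $g(x) = g(f^k(x)) = (f')^k(g(x))$. Thus $g(x)$ is a periodic point of $f'$, so the vertices $g(x), f'(g(x)),\dots$ traverse a simple directed cycle in $G^d_{f'}$ (this is also precisely the content of Lemma~\ref{arrows}(b) applied to the cycle of $G^d_f$ through $x$). Because $G^u_{f'}$ is a forest, that cycle has length $1$, i.e.\ $f'(g(x)) = g(x)$. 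This gives the second condition of Lemma~\ref{MMM} at once, and it also gives $g(f(x)) = f'(g(x)) = g(x)$ for every $x$; since each connected component of $G^u_f$ is a single $f$-orbit, it follows that $g$ is constant on each connected component of $G^u_f$. Lemma~\ref{MMM} then yields that $g$ is trivial.

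I do not expect a real obstacle here: the only points requiring a little care are the two elementary graph-theoretic observations above — that for a bijection $f$ the connected components of $G^u_f$ coincide with the $f$-orbits, and that a forest $G^u_{f'}$ forces $G^d_{f'}$ to have no nontrivial directed cycles. Note that invoking Lemma~\ref{arrows}(c) alone would be too weak, since it only locates the image of a component inside a component of $G^u_{f'}$ without collapsing it to a point; one genuinely needs the cycle statement in Lemma~\ref{arrows}(b) (equivalently, the periodic-point argument) to conclude.
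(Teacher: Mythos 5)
Your proof is correct and follows essentially the same route as the paper: reduce to Lemma~\ref{MMM}, note that for a bijection $f$ the components of $G^u_f$ are directed cycles, and use the fact (Lemma~\ref{arrows}(b), which you re-derive via the periodic-point computation $g(x)=(f')^k(g(x))$) that their images are directed cycles in $G^d_{f'}$, hence loops at fixed points since $G^u_{f'}$ is a forest. No gaps.
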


\begin{proof} By Lemma~\ref{MMM}, we must show that if $C$ is a connected component of $G^u_f$, then  $g$ is constant on $C$ and $g(C)$ consists of an element of $X'$ fixed by $f'$. Since $f$ is a bijection, the connected component $C$ of $G^u_f$, that is, a cycle, corresponds to an oriented cycle in $G^d_f$. By Lemma~\ref{arrows}(b), the image of a directed cycle of $G^d_f$ via $g$ is a directed cycle in $G^d_{f'}$. But the directed cycles in $G^d_{f'}$ are only the trivial cycles of length $1$ based on the elements of $X'$ fixed by $f'$. This concludes the proof.\end{proof}

\section{Prekernels and precokernels}

Let $f\colon X\to X'$ be a morphism in $\Cal M$. We say that a morphism $k\colon K\to X$ in $\Cal M$ is a \emph{prekernel} of $f$ if the following properties hold: 
\begin{enumerate}
	\item $fk$ is a trivial morphism.
	\item Whenever $\ell \colon Y\to X$ is a morphism in $\Cal M$ and $f\ell$ is trivial, then there exists a unique morphism $\ell'\colon Y\to K$ in $\Cal M$ such that $\ell=k\ell'$. 
\end{enumerate}

A \emph{precokernel} of $f$ is a morphism $p\colon X'\to P$ such that:
\begin{enumerate}
	\item $pf$ is a trivial morphism.
	\item Whenever $h\colon X'\to Y$ is a morphism such that $h f$ is trivial, then there exists a unique morphism $h'\colon P\to Y$ with $h=h' p$.
\end{enumerate}

\bigskip

Not all morphisms in $\Cal M$ have a prekernel. For instance, let $f\colon X\to X$ be any fixed-point-free permutation of $X$, for example any cycle of length $n$. Then, for every trivial object $(Y,\iota_Y)$ of $\Cal M$, there are no morphisms  $(Y,\iota_Y)\to (X,f)$ in $\Cal M$. Hence there is no trivial morphism $(Z,h)\to (X,f)$ in $\Cal M$, for any object $(Z,h)$ of $\Cal M$. Therefore  for every object $(W,\ell)$ of $\Cal M$, all morphisms $(W,\ell)\to (X,f)$ have no prekernel.

\bigskip

Now let $(X,f)$ be an object of $\Cal M$. Let $\sim$ be the congruence on the universal algebra $(X,f)$ generated by the subset $A:=\{\,(f^n(x), f^{n+1}(x))\mid x\in X\,\}$ of $X\times X$. That is, $\sim$ is the intersection of all the congruences of the universal algebra $(X,f)$, viewed as subsets of $X\times X$, that contain the subset $A$. 

\begin{Lemma}\label{NNN} The following conditions are equivalent for any two elements\linebreak $x_1, x_2\in X$:

{\rm (a)} $x_1\sim x_2$.

{\rm (b)} there exist non-negative integers $t_1, t_2$ such that $x_1=f^{t_1}(x_2)$ and $x_2=f^{t_2}(x_1)$.

{\rm (c)} $x_1$ and $x_2$ are on an oriented cycle (possibly of length $1$) of $G^d_f$.\end{Lemma}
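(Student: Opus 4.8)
The strategy is to prove the cycle of implications $(c)\Rightarrow(b)\Rightarrow(a)\Rightarrow(c)$, since each of these is fairly direct once we understand the combinatorial description of the congruence $\sim$. The real content is in $(a)\Rightarrow(c)$, where we must show that the \emph{generated} congruence is no bigger than the explicit relation described in (c); the other two implications are essentially bookkeeping.

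First I would dispose of $(c)\Rightarrow(b)$. If $x_1$ and $x_2$ both lie on an oriented cycle of $G^d_f$, say a cycle of length $c\geq 1$, then moving forward along the cycle from $x_1$ we reach $x_2$ after some number $t_2$ of steps (with $0\le t_2<c$), so $x_2=f^{t_2}(x_1)$; and continuing around the cycle from $x_2$ we return to $x_1$ after $c-t_2$ further steps, so $x_1=f^{c-t_2}(x_2)$. Setting $t_1:=c-t_2$ gives (b). For $(b)\Rightarrow(a)$: from $x_2=f^{t_2}(x_1)$ and $x_1=f^{t_1}(x_2)$ we get $x_1=f^{t_1+t_2}(x_1)$, hence $x_1=f^{k(t_1+t_2)}(x_1)$ for all $k\ge 0$, so we may assume $t_1+t_2\ge n$. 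Now I use the generating set $A=\{(f^n(x),f^{n+1}(x))\mid x\in X\}$: since $\sim$ is a congruence containing $A$, applying the unary operation we get $f^n(x)\sim f^{n+1}(x)\sim f^{n+2}(x)\sim\cdots$, i.e. $f^j(y)\sim f^{j+1}(y)$ whenever $j\ge n$, for any $y$. Chaining these, $f^n(x_1)\sim f^{t_1+t_2}(x_1)=x_1$ (assuming $t_1+t_2\ge n$), and likewise $f^n(x_1)=f^{n-t_2}(x_2)$ hmm — more cleanly: $x_1=f^{t_1+t_2}(x_1)$ with $t_1+t_2\ge n$ gives $x_1\sim f^n(x_1)$, and applying $f^{t_2}$ to $x_1\sim f^{t_1+t_2}(x_1)$ is not what I want; instead note $x_2=f^{t_2}(x_1)$ and $x_1=f^{t_1+t_2}(x_1)$, so both $x_1$ and $x_2$ equal $f^j(x_1)$ for suitable $j$ in the eventually-periodic tail, and all such elements are $\sim$-equivalent to $f^n(x_1)$ by the chain above; hence $x_1\sim x_2$.

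The main obstacle is $(a)\Rightarrow(c)$. Here I would argue that the relation $R$ defined by condition (c) — ``$x_1$ and $x_2$ lie on a common oriented cycle of $G^d_f$'' — is itself a congruence on $(X,f)$ containing $A$; then since $\sim$ is the \emph{smallest} such congruence, $\sim\,\subseteq R$, giving $(a)\Rightarrow(c)$. That $R$ is an equivalence relation is clear once one observes that by Proposition~\ref{leila} every element of $X$ eventually maps into $A_0$, the vertices on cycles are exactly those in $A_0$, and $A_0$ decomposes into the disjoint oriented cycles of the permutation $f|_{A_0}$; two elements are $R$-related precisely when they lie in the same such cycle (and elements outside $A_0$ are $R$-related to nothing but themselves). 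For compatibility with $f$: if $x_1\,R\,x_2$ then both are in $A_0$ on the same cycle, and $f$ permutes that cycle, so $f(x_1)$ and $f(x_2)$ are again on it, whence $f(x_1)\,R\,f(x_2)$; if $x_1\,R\,x_2$ with $x_1=x_2$ there is nothing to check. Finally $A\subseteq R$: for any $x$, $f^n(x)\in A_0$ lies on its cycle, and $f^{n+1}(x)=f(f^n(x))$ is the next vertex on that same cycle, so $(f^n(x),f^{n+1}(x))\in R$. This completes the loop. The one point requiring a little care is checking that $R$ really is transitive and that nothing forces elements of different cycles (or elements of the transient part $X\setminus A_0$) to be identified — this is exactly where the structural Proposition~\ref{leila} and the description of connected components from Section~\ref{XXX} do the work.
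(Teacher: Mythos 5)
Your proof is correct and follows essentially the same strategy as the paper: the hard inclusion is obtained by exhibiting the explicit relation as a congruence on $(X,f)$ containing the generating set $A$ (you do this for the graph-theoretic description (c), the paper for the arithmetically equivalent description (b)), and the converse is obtained by chaining the generating pairs $f^{j}(x_1)\sim f^{j+1}(x_1)$, $j\ge n$, around the cycle through $x_1$. The slightly garbled middle of your (b)$\Rightarrow$(a) step does close correctly via the observation that $x_1$ and $x_2=f^{t_2}(x_1)$ both occur as $f^{j}(x_1)$ with $j\ge n$, exactly as in the paper.
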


\begin{proof} (a)${}\Rightarrow{}$(b). Let $\sim'$ be the relation on $X$ defined, for every $x_1,x_2\in X$, by $x_1\sim' x_2$ if there exist non-negative integers $t_1, t_2$ such that $x_1=f^{t_1}(x_2)$ and $x_2=f^{t_2}(x_1)$. We want to show that $\sim\,\,\subseteq\,\,\sim'$. Now $\sim'$ is reflexive (take $t_1=t_2=0$), and clearly symmetric. In order to prove that it is transitive, suppose $x_1\sim' x_2$ and $x_2\sim' x_3$ ($x_1,x_2,x_3\in X$). There exist integers $t_1, t_2, t_3,t_4\ge0$ such that $x_1=f^{t_1}(x_2)$, $x_2=f^{t_2}(x_1)$, $x_2=f^{t_3}(x_3)$ and $x_3=f^{t_4}(x_2)$. Then $x_1=f^{t_1}(x_2)=f^{t_1+t_3}(x_3)$ and $x_3=f^{t_4}(x_2)=f^{t_4+t_2}(x_1)$. Hence $x_1\sim' x_3$. This proves that $\sim'$ is an equivalence relation on the set $X$. Moreover, if 
$x_1\sim' x_2$, then $x_1=f^{t_1}(x_2)$ and $x_2=f^{t_2}(x_1)$ for suitable $t_1,t_2\ge0$, so $f(x_1)=f^{t_1}(f(x_2))$ and $f(x_2)=f^{t_2}(f(x_1))$. Thus $f(x_1)\sim' f(x_2)$. This shows that $\sim'$ is a congruence for the universal algebra $(X,f)$. Now $A\subseteq\,\,\sim'$, because if $(f^n(x), f^{n+1}(x))$ is an arbitrary element of $A$ ($x\in X$), then, in the  notation of Section~\ref{XXX}, $f^{t_0}(x)=f^{t_1}(x)$, so $f^n(x)=f^{n+k}(x)$ for some $k>0$. Thus, for $t_1:=k-1$, we have that $f^n(x)=f^{n+k}(x)=f^{k-1}(f^{n+1}(x))=f^{t_1}(f^{n+1}(x))$ and, for $t_2:=1$, $f^{n+1}(x)=f^{t_2}(f^n(x))$. This proves that $f^n(x)\sim' f^{n+1}(x)$. We have thus shown that $A\subseteq\,\,\sim'$. As $\sim$ is generated by $A$, we obtain $\sim\,\,\subseteq\,\,\sim'$, that is, (a)${}\Rightarrow{}$(b).

(b)${}\Rightarrow{}$(a). We must show that $\sim'\,\subseteq\,\,\sim$. Suppose $x_1\sim' x_2$ ($x_1,x_2\in X$). If $x_1=x_2$, then $x_1\sim x_2$. Therefore we can suppose $x_1\ne x_2$. Thus there exist $t_1, t_2\ge0 $ such that $x_1=f^{t_1}(x_2)$, $x_2=f^{t_2}(x_1)$ and $t_1+t_2>0$. Hence $x_1=f^{t_1+t_2}(x_1)$. Consider the subset $\{x_1,f(x_1), f^2(x_1),\dots , f^{t_1+t_2-1}(x_1)\}$ of $X$. Then $f$ permutes cyclically the elements of this subset, so that $f^n$ permutes the elements of this subset 
\begin{align*}
&\{x_1,f(x_1), f^2(x_1),\dots , f^{t_1+t_2-1}(x_1)\}\\ &\qquad=\{f^n(x_1),f^{n+1}(x_1), f^{n+2}(x_1),\dots , f^{n+t_1+t_2-1}(x_1)\}.\end{align*}
 But $(f^n(x_1),f^{n+1}(x_1))\in A$, $(f^{n+1}(x_1), f^{n+2}(x_1))\in A$, and so on, so that $f^n(x_1)\sim f^{n+1}(x_1)$, $f^{n+1}(x_1)\sim f^{n+2}(x_1)$, \dots, $f^{n+t_1+t_2-2}(x_1)\sim f^{n+t_1+t_2-1}(x_1)$. Thus 
all the elements of the subset $$\{f^n(x_1),f^{n+1}(x_1), f^{n+2}(x_1),\dots , f^{n+t_1+t_2-1}(x_1)\}$$ are equivalent modulo $\sim$.  But $x_1$ and $x_2=f^{t_2}(x_1)$ are two of those elements, so $x_1\sim x_2$.

(b)${}\Leftrightarrow{}$(c) is trivial.\end{proof}

Clearly, the canonical projection $\pi\colon X\to X/\!\!\sim$ is the coequalizer  of the two endomorphisms $f^n$ and $f^{n+1}$ of $(X,f)$. Also, since $\sim$ is a congruence  on the unary universal algebra $(X,f)$, $f$ induces a mapping $\overline{f}\colon X/\!\!\sim{}{}\to X/\!\!\sim$. Thus $(X/\!\!\sim,\overline{f})$ is a universal algebra and the diagram
\[\xymatrix{ 
X \ar[r]^{\pi\ } \ar[d]_{f} & X/\!\!\sim\ar[d]^{\overline{f}} \\ 
X\ar[r]_{\pi\ } & X/\!\!\sim
}
\] commutes. Notice that $X/\!\!\sim$ is a forest, because $\overline{f}^n=\overline{f}^{n+1}$, so that $\overline{f}$ always has fixed points (the roots of the trees in the forest $G^d_{\overline{f}}$). 

As an example, we have drawn the graph $G^d_f$ of a mapping $f\colon X\to X$ in ~\figref{3}. In this example, $X$ has $n=27$ elements. In ~\figref{4}, we have represented the partition of the set $X$ of vertices of the graph $G^d_f$ in equivalence classes modulo $\sim$. One equivalence classes has $6$ elements, another one has $4$ elements, and all the other classes consist of one element. Hence there are $19 $ equivalence classes. ~\figref{5} represents the graph of $(X/\!\!\sim,\overline{f})$, which is a graph with $19$ vertices.

\begin{figure}
\[  \xygraph{
!{(-4.75,2) }*+{\bullet}="a"
!{(-4,1) }*+{\bullet}="b"
!{(-3,1) }*+{\bullet}="c"
!{(-2.25,2)}*+{\bullet}="d"
!{(-3,3) }*+{\bullet}="e"
!{(-4,3) }*+{\bullet}="f"
!{(-4,4) }*+{\bullet}="g"
!{(-2,4)}*+{\bullet}="h"
!{(-4.75,5) }*+{\bullet}="i"
!{(-4,5) }*+{\bullet}="j"
!{(-3.25,5)}*+{\bullet}="k"
!{(-2,5) }*+{\bullet}="l"
!{(-1,4.9) }*+{\bullet}="m"
!{(-2,6) }*+{\bullet}="n"
!{(0,1)}*+{\bullet}="o"
!{(0,2) }*+{\bullet}="p"
!{(1,1) }*+{\bullet}="q"
!{(1,2) }*+{\bullet}="r"
!{(2.5,1)}*+{\bullet}="s"
!{(4,1)}*+{\bullet}="t"
!{(4,2) }*+{\bullet}="u"
!{(4,3) }*+{\bullet}="v"
!{(4,4) }*+{\bullet}="w"
!{(3.5,4)}*+{\bullet}="x"
!{(3.25,4) }*+{\bullet}="y"
!{(4.5,4) }*+{\bullet}="z"
!{(4.85,4) }*+{\bullet}="z2"
"a":"b" "b":"c""c":"d""i":"g"
"d":"e" "e":"f" "f":"a"
"j":"g" "g":"f""k":"g"
"n":"l" "l":"h" "h":"e"
"m":"h""o":"q"
"q":"r" "r":"p" "p":"o"
"u":"t" "v":"u" "w":"v"
"x":"v" "y":"v" "z":"v""z2":"v"
}  \] \caption{\small The graph $G^d_f$ of a mapping $f\colon X\to X$.}\label{3} 
\end{figure}

\begin{figure}
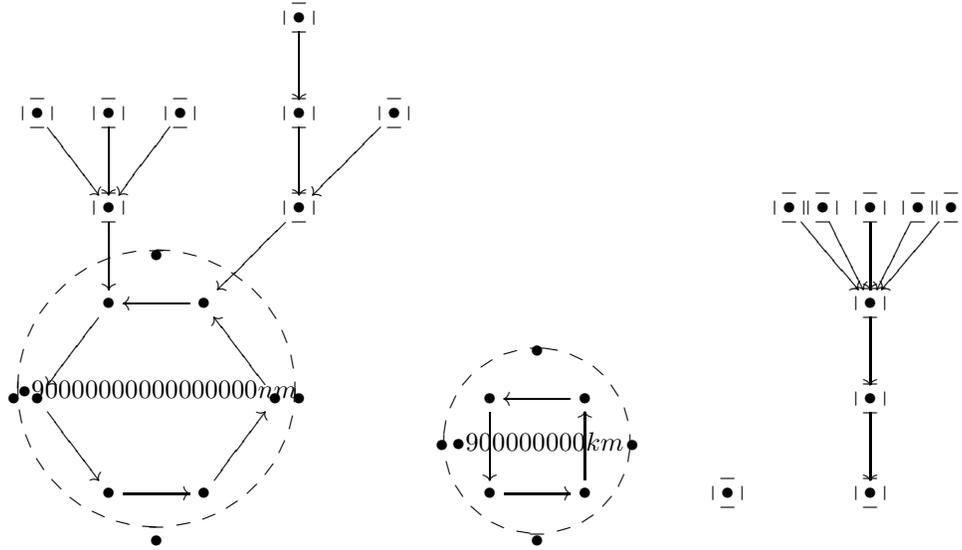

\[  \xygraph{
!{(-4.75,2) }*+{\bullet}="a"
!{(-4,1) }*+{\bullet}="b"
!{(-3,1) }*+{\bullet}="c"
!{(-2.25,2)}*+{\bullet}="d"
!{(-3,3) }*+{\bullet}="e"
!{(-4,3) }*+{\bullet}="f"
!{(-4,4) }*+{\bullet}*\frm{-o}="g"
!{(-2,4)}*+{\bullet}*\frm{-o}="h"
!{(-4.75,5) }*+{\bullet}*\frm{-o}="i"
!{(-4,5) }*+{\bullet}*\frm{-o}="j"
!{(-3.25,5)}*+{\bullet}*\frm{-o}="k"
!{(-2,5) }*+{\bullet}*\frm{-o}="l"
!{(-1,5) }*+{\bullet}*\frm{-o}="m"
!{(-2,6) }*+{\bullet}*\frm{-o}="n"
!{(0,1)}*+{\bullet}="o"
!{(0,2) }*+{\bullet}="p"
!{(1,1) }*+{\bullet}="q"
!{(1,2) }*+{\bullet}="r"
!{(2.5,1)}*+{\bullet}*\frm{-o}="s"
!{(4,1)}*+{\bullet}*\frm{-o}="t"
!{(4,2) }*+{\bullet}*\frm{-o}="u"
!{(4,3) }*+{\bullet}*\frm{-o}="v"
!{(4,4) }*+{\bullet}*\frm{-o}="w"
!{(3.5,4)}*+{\bullet}*\frm{-o}="x"
!{(3.15,4) }*+{\bullet}*\frm{-o}="y"
!{(4.5,4) }*+{\bullet}*\frm{-o}="z"
!{(4.85,4) }*+{\bullet}*\frm{-o}="z2"
!{(-5,2)}*+[white]{\bullet}="A"
!{(-3.5,.5) }*+[white]{\bullet}="B"
!{(-2,2) }*+[white]{\bullet}="C"
!{(-3.5,3.5) }*+[white]{\bullet}="D"
!{(.5,.5)}*+[white]{\bullet}="O"
!{(1.5,1.5) }*+[white]{\bullet}="P"
!{(-.5,1.5) }*+[white]{\bullet}="Q"
!{(.5,2.5) }*+[white]{\bullet}="R"
!{(.5,1.55)}*+[white]{\bullet900000000km}*\frm{-o}="O2"
!{(-3.5,2.099)}*[white]{\bullet90000000000000000nm}*\frm{-o}="O1"
"a":"b" "b":"c""c":"d"
"i":"g"
"d":"e" "e":"f" "f":"a"
"j":"g" "g":"f""k":"g"%
"n":"l" "l":"h" "h":"e"
"m":"h""o":"q"
"q":"r" "r":"p" "p":"o"
"u":"t" "v":"u" "w":"v"
"x":"v" "y":"v" "z":"v""z2":"v"
}  \] \caption{\small The partition of the set $X$ of vertices in equivalence classes modulo $\sim$.}\label{4} 
\end{figure}

\begin{figure}
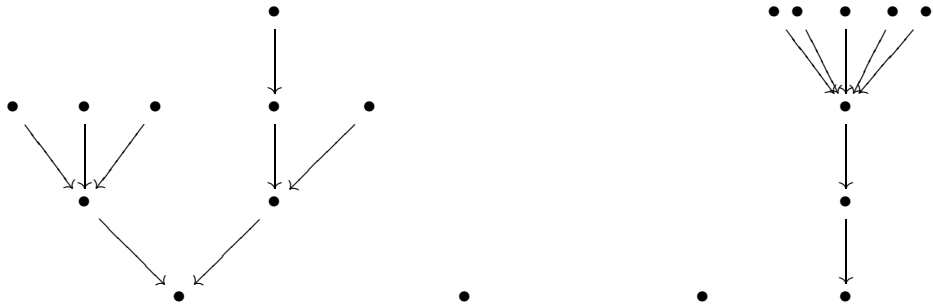
 
\[  \xygraph{
!{(-3,1) }*+{\bullet_{}}="e"
!{(-4,2) }*+{\bullet_{}}="g"
!{(-2,2)}*+{\bullet_{}}="h"
!{(-4.75,3) }*+{\bullet_{}}="i"
!{(-4,3) }*+{\bullet_{}}="j"
!{(-3.25,3)}*+{\bullet_{}}="k"
!{(-2,3) }*+{\bullet_{}}="l"
!{(-1,3) }*+{\bullet_{}}="m"
!{(-2,4) }*+{\bullet_{}}="n"
!{(0,1)}*+{\bullet_{}}="o"
!{(2.5,1)}*+{\bullet_{}}="s"
!{(4,1)}*+{\bullet_{}}="t"
!{(4,2) }*+{\bullet_{}}="u"
!{(4,3) }*+{\bullet_{}}="v"
!{(4,4) }*+{\bullet_{}}="w"
!{(3.5,4)}*+{\bullet_{}}="x"
!{(3.25,4) }*+{\bullet_{}}="y"
!{(4.5,4) }*+{\bullet_{}}="z"
!{(4.85,4) }*+{\bullet_{}}="z2"
"i":"g"
"j":"g"
 "g":"e" 
"k":"g"
"n":"l" "l":"h" "h":"e"
"m":"h"
"u":"t" "v":"u" "w":"v"
"x":"v" "y":"v" "z":"v""z2":"v"
}  \] \caption{\small The quotient set $X/\!\!\sim$.}\label{5}
\end{figure}

Let us prove that the morphism $\pi\colon X\to X/\!\!\sim$ has a prekernel, which is the embedding $\varepsilon\colon (A_0,f|^{A_0}_{A_0})\hookrightarrow(X,f)$. Here $A_0$ is the subset of $X$ as defined in Proposition~\ref{leila}. Hence the restriction $f|^{A_0}_{A_0}\colon A_0\to A_0$ of $f$ to $A_0$ is a bijection. Clearly, $\varepsilon$ is a morphism in $\Cal M$, $\pi\varepsilon\colon A_0\to X/\!\!\sim$ sends the cycles of $A_0$ to the roots of the tree $X/\!\!\sim$, hence $\pi\varepsilon$ is a trivial morphism by Lemma~\ref{MMM}. In order to prove property (2) in the definition of prekernel, fix a morphism $\ell \colon (Y,g)\to (X,f)$ in $\Cal M$ with $\pi\ell$  trivial. We have a commutative diagram \begin{equation*}
\xymatrix{ 
Y \ar[r]^{\ell} \ar[d]_{g} & X\ar[r]^{\pi} \ar[d]^{f} & X/\!\!\sim\ar[d]^{\overline{f}} \\ 
Y\ar[r]_{\ell} & X  \ar[r]^{\pi} & X/\!\!\sim   .
}
\end{equation*} Thus, for any connected component $C_Y$ of the graph $G^u_g$ of $(Y,g)$, we have that $\ell(C_Y)$ is contained in a cycle of $X$. It follows that $\ell(Y)\subseteq A_0$. Let $\ell':=\ell|^{A_0}\colon Y\to A_0$ be the corestriction of $\ell$, obtained by restricting the codomain $X$ of $\ell$ to $A_0$. Then $\ell=\varepsilon\ell'$. As far as the uniqueness of such an $\ell'$ is concerned, suppose that we also have another morphism $\ell''\colon Y\to A_0$ with $\ell=\varepsilon\ell''$. Then $\varepsilon\ell'=\varepsilon\ell''$, so $\ell'=\ell''$ because $\varepsilon$ is an injective mapping. This proves that the embedding $\varepsilon\colon (A_0,f|^{A_0}_{A_0})\hookrightarrow(X,f)$ is the prekernel of the canonical projection $\pi\colon X\to X/\!\!\sim$.

Conversely, we will now show that the canonical projection $\pi\colon X\to X/\!\!\sim$ is the precokernel of the embedding $\varepsilon\colon (A_0,f|^{A_0}_{A_0})\hookrightarrow(X,f)$. It is sufficient to prove property (2) in the definition of precokernel. Hence suppose we have a commutative diagram \begin{equation*}
\xymatrix{ 
A_0 \ar[r]^{\varepsilon} \ar[d]_{f|^{A_0}_{A_0}} & X\ar[r]^{\alpha} \ar[d]^{f} & Z\ar[d]^{g} \\ 
A_0\ar[r]_{\varepsilon} & X  \ar[r]^{\alpha} & Z
}
\end{equation*} with $\alpha\varepsilon$ trivial. Then the image $\alpha(C_f)$ of any cycle $C_f$ of $f$ is a fixed point of $(Z,g)$, that is, is one of the roots of a tree in the graph $G^u_g$ of $(Z,g)$. But if $x,y\in X$ and $x\sim y$, then $\alpha(x)=\alpha(y)$. Hence $\alpha$ factors through $\pi$: $$\xymatrix{ 
X\ar[r]^{\alpha} \ar[d]^{\pi} & Z\\ 
X/\!\!\sim.  \ar[ur]_{\alpha'} & 
}$$ The uniqueness of $\alpha'$ follows from the surjectivity of $\pi$.

\bigskip

We will recall the exact definition of pretorsion theory in Section~\ref{QQQ}, but the next result shows that the pair $(\Cal C,\Cal F)$ is a pretorsion theory in $\Cal M$.

\begin{theorem}\label{111} In the notation above, we have that:

{\rm (a)} For every object $(X,f)$ of $\Cal M$, there are morphisms \begin{equation}(A_0,f|^{A_0}_{A_0})\stackrel{\varepsilon}{\hookrightarrow}(X,f)\stackrel{\pi}{\hookrightarrow}(X/\!\!\sim,\overline{f})\label{_}\end{equation} in the category $\Cal M$ such that $\varepsilon$ is a prekernel of $\pi$ and $\pi$ is a precokernel of~$\varepsilon$.

{\rm (b)}  $\Hom_{\Cal M}(C,F)=\Triv_{\Cal M}(C,F)$ for every $C\in \Cal C ,\ F\in\mathcal F$. 

{\rm (c)}  If $X$ is an object of $\Cal M$ and $\Hom_{\Cal M}(X,F)=\Triv_{\Cal M}(X,F)$ for every $F\in\mathcal F$, then $X$ is an object of $\Cal C$.

{\rm (d)}  If $X$  is an object of $\Cal M$ and $\Hom_{\Cal M}(C,X)=\Triv_{\Cal M}(C,X)$ for every $C\in \Cal C$, then $X$ is an object of $\Cal F$. \end{theorem}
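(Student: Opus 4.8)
The plan is to prove the four statements of Theorem~\ref{111} in order, using the constructions and lemmas already established. For part~(a), I would simply assemble the work done in the paragraphs immediately preceding the theorem: it was shown there that the embedding $\varepsilon\colon(A_0,f|^{A_0}_{A_0})\hookrightarrow(X,f)$ is a prekernel of the canonical projection $\pi\colon(X,f)\to(X/\!\!\sim,\overline f)$, and conversely that $\pi$ is a precokernel of $\varepsilon$. So part~(a) is nothing more than recording that these two facts hold simultaneously for the same diagram~(\ref{_}); note that $(A_0,f|^{A_0}_{A_0})$ lies in $\Cal C$ since $f|^{A_0}_{A_0}$ is a bijection, and $(X/\!\!\sim,\overline f)$ lies in $\Cal F$ since $\overline f^{\,n}=\overline f^{\,n+1}$, so the diagram genuinely has the shape of a pretorsion decomposition.

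For part~(b), I would invoke Corollary~\ref{trivial} directly: if $C=(X,f)\in\Cal C$ (so $f$ is a bijection) and $F=(X',f')\in\Cal F$ (so $G^u_{f'}$ is a forest), then every morphism $g\colon C\to F$ is trivial, which gives the inclusion $\Hom_{\Cal M}(C,F)\subseteq\Triv_{\Cal M}(C,F)$; the reverse inclusion is automatic. For part~(c), suppose $(X,f)$ is an object with $\Hom_{\Cal M}((X,f),F)=\Triv_{\Cal M}((X,f),F)$ for all $F\in\Cal F$. The natural candidate to test is $F=(X/\!\!\sim,\overline f)$, which is indeed in $\Cal F$. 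The projection $\pi\colon(X,f)\to(X/\!\!\sim,\overline f)$ is then forced to be trivial, so by Lemma~\ref{MMM} it is constant on the connected components of $G^u_f$ and its image consists of $\overline f$-fixed points. But $\pi$ is surjective, so $X/\!\!\sim$ consists entirely of fixed points of $\overline f$, i.e.\ $\overline f$ is the identity; combined with the description of $\sim$ in Lemma~\ref{NNN}, being constant on connected components means each connected component of $G^u_f$ is a single $\sim$-class, hence (by Lemma~\ref{NNN}(c)) an oriented cycle of $G^d_f$, which forces $f$ to be a bijection. Hence $(X,f)\in\Cal C$.

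For part~(d), suppose $(X',f')$ satisfies $\Hom_{\Cal M}(C,(X',f'))=\Triv_{\Cal M}(C,(X',f'))$ for every $C\in\Cal C$. Here the right probe is a cyclic object: for a connected component of $G^u_{f'}$ that is \emph{not} a tree, i.e.\ one containing an oriented cycle of length $r\ge 1$ in $G^d_{f'}$, take $C=(\Z/r\Z,\,+1)$, the $r$-cycle, which is in $\Cal C$, and map it onto that oriented cycle by $k\mapsto f'^{\,k}(x_0)$ where $x_0$ sits on the cycle; by Lemma~\ref{arrows}(b) this is a morphism in $\Cal M$, and it is nonconstant when $r\ge 2$, so it is not trivial by Lemma~\ref{MMM} unless $r=1$. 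Thus every oriented cycle of $G^d_{f'}$ has length $1$; since in the notation of Section~\ref{XXX} this says $t_1=t_0+1$ for every vertex, the Proposition in Section~\ref{pre} (the equivalence of ``$G_{f'}$ is a forest'' with ``$f'^{\,n}=f'^{\,n+1}$'') gives that $G^u_{f'}$ is a forest, i.e.\ $(X',f')\in\Cal F$. The one subtlety, and the main thing to get right, is the case $r=1$: a length-$1$ oriented cycle is exactly a fixed point, which is harmless (it is already in a tree), so the argument only needs to rule out cycles of length $\ge 2$, and for those the map from the $r$-cycle is genuinely nonconstant; I expect verifying that this map really is a morphism in $\Cal M$ and that its nontriviality follows cleanly from Lemma~\ref{MMM} to be the only place requiring a little care.
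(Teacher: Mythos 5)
Your proposal is correct and follows essentially the same route as the paper: part (a) is the assembled discussion preceding the theorem, part (b) is Corollary~\ref{trivial}, and part (c) tests the hypothesis against $F=(X/\!\!\sim,\overline f)$ exactly as the paper does. The only variation is in part (d), where you probe a non-tree component with an abstract $r$-cycle $(\Z/r\Z,+1)$ (which, strictly speaking, should be realized as the cycle $(1\ 2\ \cdots\ r)$ on $\{1,\dots,r\}$ to be an object of $\Cal M$), whereas the paper probes with the embedded object $(A_0,f|^{A_0}_{A_0})$ and its inclusion $\varepsilon$; both arguments force every oriented cycle to have length $1$ via Lemma~\ref{MMM}, so this is an equivalent and equally valid choice.
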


\begin{proof} (a) was proved in the discussion above, before the statement of the theorem.

(b) is Corollary~\ref{trivial}.

(c) Let $X$ be an object of $\Cal M$ with $\Hom_{\Cal M}(X,F)=\Triv_{\Cal M}(X,F)$ for every $F\in\mathcal F$. In (\ref{_}), we have that the morphism $\pi$ is trivial because $X/\!\!\sim$ is in $\Cal F$. Hence $\pi$ sends any connected component of $G^u_f$ to a root of the forest $(X/\!\!\sim,\overline{f})$. The inverse images via $\pi$ of the roots of the forest $(X/\!\!\sim,\overline{f})$ are the cycles of $(X,f)$. Hence any connected component of $G^u_f$ is contained in a cycle of $G^u_f$. Thus
any connected component of $G^u_f$ is a cycle of $G^u_f$. Therefore $f$ is a bijection.

(d) Finally, let $X$ be an object of $\Cal M$. Assume that $\Hom_{\Cal M}(C,X)=\Triv_{\Cal M}(C,X)$ for every $C\in \Cal C$.
Suppose that $X$ has a directed cycle, so that the cycle is contained in $A_0$ necessarily. We have the embedding $\varepsilon\colon A_0\to X$, where $(A_0,f|^{A_0}_{A_0})$ is an object of $\Cal C$. By the hypothesis $\Hom_{\Cal M}(A_0,X)=\Triv_{\Cal M}(A_0,X)$, we have that $\varepsilon$ is trivial, so that the inclusion $\varepsilon$ maps every cycle in $A_0$ to a root of~$X$. Thus every cycle consists of a single point, so that in $A_0$ there are no cycles  of length $>0$. Hence in $G^u_f$ there are no cycles. This proves that $X$ is an object of $\Cal F$.\end{proof}

\section{Some noteworthy functors from the category $\Cal M$ and adjoint functors of the embeddings.}

In this section, we are going to consider some remarkable functors. 

We call {\em preradical} of $\Cal M$ any functor $R\colon\Cal M\to\Cal M$ such that, for each object $(X,f)$ of $\Cal M$, $R(X,f)$ is a subalgebra of $(X,f)$ and, for every morphism $g\colon (X,f)\to (X',f')$, $R(g)\colon R(X,f)\to R(X',f')$ is the restriction $g|_{R(X,f)}^{R(X',f')}$ of $g$. Thus a preradical is a subfunctor of the identity functor on $\Cal M$. We say that a preradical $R$ is {\em idempotent} if $R\circ R=R$.

Dually, we call {\em precoradical} of $\Cal M$ any functor $C\colon\Cal M\to\Cal M$ such that, for each object $(X,f)$ of $\Cal M$, $C(X,f)$ is a quotient algebra $(X/\!\!\equiv, \overline{f})$ of $(X,f)$ for some congruence $\equiv$ on the algebra $(X,f)$ and, for every morphism $g\colon (X,f)\to (X',f')$, $C(g)\colon C(X,f)\to C(X',f')$ is induced by $g$. Hence a precoradical is a quotient functor of the identity functor on $\Cal M$. We say that a precoradical $C$ is {\em idempotent} if, for every object $(X,f)$ of $\Cal M$, $C(\pi)\colon C(X,f)\to C(C(X,f))$ is an isomorphism. Here $\pi\colon (X,f)\to C(X,f)=(X/\equiv, \overline{f})$ is the canonical projection.

The first noteworthy functor we study in this section is an idempotent functor $C\colon\Cal M\to\Cal M$ such that $C(X,f)=(X/\!\!\sim,\overline{f})\in\Ob(\Cal F)$ for every object $(X,f)$ of $\Cal M$. The notation is an in Lemma~\ref{NNN}. 

\begin{proposition} There is an idempotent precoradical $C\colon\Cal M\to\Cal M$ such that $C(X,f)=(X/\!\!\sim,\overline{f})\in\Ob(\Cal F)$ for every object $(X,f)$ of $\Cal M$. \end{proposition}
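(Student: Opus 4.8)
The plan is to show that the assignment $(X,f)\mapsto (X/\!\!\sim,\overline f)$ is functorial, lands in $\Cal F$, and is idempotent. First I would verify functoriality: given a morphism $g\colon (X,f)\to (X',f')$ in $\Cal M$, I must check that $g$ descends to a well-defined map $\overline g\colon X/\!\!\sim\,\to X'/\!\!\sim$, i.e., that $x_1\sim x_2$ in $X$ implies $g(x_1)\sim g(x_2)$ in $X'$. This is immediate from Lemma~\ref{NNN}: if $x_1\sim x_2$ then $x_1$ and $x_2$ lie on a common directed cycle of $G^d_f$, and by Lemma~\ref{arrows}(b) the image of that directed cycle is a directed cycle of $G^d_{f'}$ containing $g(x_1)$ and $g(x_2)$, so $g(x_1)\sim g(x_2)$ again by Lemma~\ref{NNN}. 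Setting $C(X,f):=(X/\!\!\sim,\overline f)$ and $C(g):=\overline g$, functoriality (preservation of identities and composites) is a routine check on representatives, since the projections $\pi_X\colon X\to X/\!\!\sim$ are surjective. That $C$ is a precoradical is then clear: $C(X,f)$ is by construction a quotient algebra of $(X,f)$ via the congruence $\sim$, and $C(g)$ is induced by $g$.

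Next I would record that $C(X,f)\in\Ob(\Cal F)$. This was already observed in the excerpt immediately after the definition of $\overline f$: since $\sim$ is generated by the pairs $(f^n(x),f^{n+1}(x))$, one has $\overline f^{\,n}=\overline f^{\,n+1}$ on $X/\!\!\sim$, so by the proposition characterizing forests ($f^n=f^{n+1}$ iff the graph is a forest) the graph $G^u_{\overline f}$ is a forest, i.e.\ $(X/\!\!\sim,\overline f)\in\Cal F$. Alternatively, and more conceptually, the equivalence classes of $\sim$ are exactly the directed cycles of $G^d_f$ (Lemma~\ref{NNN}(c)), so in $X/\!\!\sim$ every cycle collapses to a single fixed point, leaving a forest.

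Finally, idempotence: I must show that for the canonical projection $\pi\colon (X,f)\to C(X,f)=(X/\!\!\sim,\overline f)$, the induced morphism $C(\pi)\colon C(X,f)\to C(C(X,f))$ is an isomorphism. But $C(X,f)$ already lies in $\Cal F$, so its graph is a forest; hence the congruence $\sim$ on $C(X,f)$ generated by the pairs $(\overline f^{\,n}(\bar x),\overline f^{\,n+1}(\bar x))$ is trivial — indeed, by Lemma~\ref{NNN} two elements of $X/\!\!\sim$ are $\sim$-related there iff they lie on a common directed cycle of $G^d_{\overline f}$, and in a forest the only directed cycles are the loops at fixed points, so the relation on $X/\!\!\sim$ is the diagonal. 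Therefore $C(C(X,f))=C(X,f)$ and $C(\pi)$ is the identity, which is certainly an isomorphism. This shows $C$ is an idempotent precoradical, completing the proof.

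I do not expect a serious obstacle here: every ingredient has already been set up. The only point requiring a moment of care is checking that $\overline g$ is well defined (the functoriality step), and that reduces cleanly to Lemma~\ref{NNN} together with Lemma~\ref{arrows}(b); the rest is bookkeeping with surjective projections and the forest characterization.
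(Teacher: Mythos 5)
Your proposal is correct and follows essentially the same route as the paper: well-definedness of $\overline g$ via Lemma~\ref{arrows}(b) and Lemma~\ref{NNN}, membership in $\Cal F$ from the discussion in Section~\ref{pre}, and idempotence because the congruence $\sim$ is the diagonal on any object of $\Cal F$. Your version merely spells out the idempotence step (via Lemma~\ref{NNN}(c) and the absence of nontrivial directed cycles in a forest) in more detail than the paper does.
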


\begin{proof} In order to prove that $C\colon\Cal M\to\Cal M$ is a precoradical, we must show that every morphism $g\colon (X,f)\to (X,f')$ induces a morphism $\overline{g}\colon (X/\!\!\sim,$ $\overline{f})\to (X'/\!\!\sim',\overline{f'})$. By Lemma~\ref{arrows}(b),    $g$ maps any oriented cycle of $G^d_f$ to an oriented cycle of $G^d_{f'}$. From Lemma~\ref{NNN}, it follows that, for every $x_1,x_2\in X$, $x_1\sim x_2$ implies $g(x_1)\sim' g(x_2)$. Thus the kernel $\sim_{\pi'g}$ of $\pi'g$ contains the kernel $\sim_{\pi}$ of $\pi$. We have the following diagram: \[
\xymatrix{ 
X \ar[r]^{\pi} \ar[d]_{g} & X/\!\!\sim \ar@{.>}[d]^{\exists !\overline{g}} \\ 
X' \ar[r]_{\pi'} & X'/\!\!\sim'
}
\]
Hence there exists a unique mapping $\overline{g}\colon X/\!\!\sim\,\to X'/\!\!\sim'$ that makes the diagram commute. It is now clear that $C$ is a precoradical.

We have already seen in Section~\ref{pre} that $(X/\!\!\sim,\overline{f})$ is an object of $\Cal F$ for every object $(X,f)$ of $\Cal M$. Moreover, 
for every $(F,h)$ in $\Cal F$ the congruence $\sim$ on $F$ is the equality on $f$. It follows that the precoradical $C$ is idempotent.
\end{proof}

\begin{proposition} There is an idempotent preradical $R\colon\Cal M\to\Cal M$ such that $R(X,f)=(A_0,f|_{A_0}^{A_0})\in\Ob(\Cal C)$ for every object $(X,f)$ of $\Cal M$. Here $A_0=f^n(X)$ is the set of all vertices of $G^d_f$ on some oriented cycle.\end{proposition}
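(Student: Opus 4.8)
The plan is to verify directly that the assignment $(X,f)\mapsto (A_0, f|_{A_0}^{A_0})$, where $A_0 = f^n(X)$, extends to a functor which is a subfunctor of the identity, and then that it is idempotent. First I would note that $A_0$ is a subalgebra of $(X,f)$: by Proposition~\ref{leila}, $f(A_0)=A_0\subseteq A_0$, so $f|_{A_0}^{A_0}\colon A_0\to A_0$ is well-defined (indeed a bijection, so $(A_0,f|_{A_0}^{A_0})\in\Ob(\Cal C)$). Next, given a morphism $g\colon (X,f)\to(X',f')$ in $\Cal M$, I must check that $g(A_0)\subseteq A_0'$, where $A_0' = (f')^{n'}(X')$ with $n'=|X'|$; then $R(g)$ is defined as the restriction $g|_{A_0}^{A_0'}$, and functoriality (preservation of identities and composites) is immediate since it is just restriction of mappings.

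The containment $g(A_0)\subseteq A_0'$ is the one point requiring an argument, and I expect it to be the main (though still routine) obstacle, mostly because $X$ and $X'$ may have different cardinalities so one cannot simply compare $f^n$ with $(f')^n$. I would argue as follows: by Lemma~\ref{NNN}, $A_0$ is exactly the set of vertices of $G^d_f$ lying on an oriented cycle (the statement of the proposition already records this). By Lemma~\ref{arrows}(b), $g$ sends an oriented cycle of $G^d_f$ to an oriented cycle of $G^d_{f'}$; hence every element of $g(A_0)$ lies on an oriented cycle of $G^d_{f'}$, i.e. $g(A_0)\subseteq A_0'$. (Alternatively: if $x\in A_0$ then $x = f^k(x)$ for some $k\ge 1$, so $g(x) = g(f^k(x)) = (f')^k(g(x))$ using commutativity of diagram~(\ref{homo}) iterated, which again exhibits $g(x)$ on a cycle of $G^d_{f'}$, hence $g(x)\in A_0'$.) This gives that $R$ is a well-defined subfunctor of the identity functor on $\Cal M$, i.e. a preradical.

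Finally I would check idempotence. For any object $(X,f)$, we have $R(X,f) = (A_0, f|_{A_0}^{A_0})$, and since $f|_{A_0}^{A_0}$ is a bijection of the finite set $A_0$, iterating it is surjective, so $(f|_{A_0}^{A_0})^{|A_0|}(A_0) = A_0$. Thus applying $R$ again to $(A_0,f|_{A_0}^{A_0})$ returns $(A_0, f|_{A_0}^{A_0})$ on the nose, so $R\circ R = R$; in particular $R$ is idempotent. This completes the proof, modulo the observation — already made in the text preceding the proposition and in Lemma~\ref{NNN} — that $A_0 = f^n(X)$ really is the set of vertices of $G^d_f$ on an oriented cycle, which is what makes the functoriality argument via Lemma~\ref{arrows} clean.
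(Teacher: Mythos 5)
Your proposal is correct and follows essentially the same route as the paper: the paper's proof also reduces everything to showing $g(A_{0,f})\subseteq A_{0,f'}$ via Lemma~\ref{arrows} (cycles map to cycles) and dismisses the remaining verifications as clear. You have simply spelled out the parts the paper leaves implicit (the subalgebra check and the idempotence computation), which is fine.
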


\begin{proof} Like for the previous proposition, to prove that $R\colon\Cal M\to\Cal M$ is a preradical, it suffices to show that every morphism $g\colon (X,f)\to (X,f')$ maps $A_{0,f}$ to $A_{0,f'}$. By Lemma~\ref{arrows}, $g$ maps any cycle  of $G^d_f$ to a cycle of $G^d_{f'}$. Therefore $g(A_{0,f})\subseteq A_{0,f'}$. The rest is clear.\end{proof}

Notice that the canonical mapping $(X,f)\to C(X,f)$ has a prekernel, which is $R(X,f)$ (Theorem~\ref{111}). The prekernel of the canonical embedding $R(X,f)\hookrightarrow (X,f)$ does not exists in general.

\bigskip

A third remarkable functor $W\colon\Cal M\to\Cal C$ is the following. Let $(X,f)$ be an object of $\Cal M$ and we write $f$ as $f=m\sigma$, where $m$ is a product of moves (of the vertices on the forest) and $\sigma$ is a permutation (of the vertices on the cycles), like in Theorem~\ref{1}. That is, $m$ is a product of moves of elements of $X\setminus A_0$ to elements of $X$ and $\sigma$ permutes the elements of $A_0$. Then $f^{n!}(x)\in A_0$ for every $x\in X$, so that $\sigma^{-(n!)}(f^{n!}(x))\in A_0$. 

\begin{Lemma} The mapping $w\colon (X,f)\to (A_0,\sigma)$, defined by $$w(x)=\sigma^{-(n!)}(f^{n!}(x))\qquad\mbox{\rm for every }x\in X,$$ is a morphism in $\Cal M$.\end{Lemma}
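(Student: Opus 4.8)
The plan is to verify directly that the diagram
\[
\xymatrix{
X \ar[r]^{w} \ar[d]_{f} & A_0 \ar[d]^{\sigma} \\
X\ar[r]_{w} & A_0
}
\]
commutes, i.e.\ that $\sigma(w(x))=w(f(x))$ for every $x\in X$; the fact that $w$ lands in $A_0$ was already observed in the paragraph preceding the statement. The key algebraic input is that $f$ and $\sigma$ agree on $A_0$ up to what happens off $A_0$: more precisely, since $A_0=f^{n!}(X)$ is invariant under $f$ and $f|_{A_0}^{A_0}=\sigma|_{A_0}^{A_0}$ (this is how $\sigma$ was extracted in Theorem~\ref{1}, $\sigma$ being the bijection agreeing with $f$ on $A_0$ and the identity elsewhere), we have $f^k(y)=\sigma^k(y)$ for every $y\in A_0$ and every $k\ge 0$. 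In particular $f^{n!}(x)\in A_0$ gives $f^{\,n!}(f^{n!}(x))=\sigma^{n!}(f^{n!}(x))$, which is the one identity doing all the work.

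First I would compute $w(f(x))=\sigma^{-(n!)}\bigl(f^{\,n!}(f(x))\bigr)=\sigma^{-(n!)}\bigl(f\bigl(f^{\,n!}(x)\bigr)\bigr)$, using only that composition of $f$'s commutes. Now $f^{\,n!}(x)\in A_0$, and $f$ restricted to $A_0$ equals $\sigma$ restricted to $A_0$, so $f\bigl(f^{\,n!}(x)\bigr)=\sigma\bigl(f^{\,n!}(x)\bigr)$. Hence
\[
w(f(x))=\sigma^{-(n!)}\bigl(\sigma\bigl(f^{\,n!}(x)\bigr)\bigr)=\sigma\bigl(\sigma^{-(n!)}\bigl(f^{\,n!}(x)\bigr)\bigr)=\sigma(w(x)),
\]
where the middle step is just that $\sigma^{-(n!)}$ and $\sigma$ commute (both are powers of $\sigma$, an honest permutation of the finite set $A_0$, so negative powers make sense). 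This is exactly the morphism condition, so $w\colon(X,f)\to(A_0,\sigma)$ is a morphism in $\Cal M$.

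I would also include a sentence confirming well-definedness of the target: $(A_0,\sigma)$ means $(A_0,\sigma|_{A_0}^{A_0})$, and $\sigma|_{A_0}^{A_0}=f|_{A_0}^{A_0}$ is a bijection of $A_0$ by Proposition~\ref{leila}, so it is indeed an object of $\Cal C$ and the expression $\sigma^{-(n!)}$ is meaningful. The only point that deserves care — and the main (minor) obstacle — is justifying the identity $f(y)=\sigma(y)$ for $y\in A_0$: this is not literally in the statement of Theorem~\ref{1} but is immediate from its proof, where $\sigma$ was defined precisely as the extension of $f|_{A_0}^{A_0}$ by the identity; alternatively one notes $f=m\sigma$ with $m$ a product of moves of elements of $X\setminus A_0$, so $m$ fixes $A_0$ pointwise and $m$ fixes $\sigma(A_0)=A_0$ pointwise, whence $f|_{A_0}=m\sigma|_{A_0}=\sigma|_{A_0}$. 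Everything else is a one-line manipulation of commuting powers of a permutation.
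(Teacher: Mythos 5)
Your proof is correct and is essentially the paper's own argument: both rest on the single identity $ff^{n!}=\sigma f^{n!}$ (valid because $f^{n!}(X)\subseteq A_0$ and $f$, $\sigma$ agree on $A_0$) together with the fact that powers of the permutation $\sigma$ commute, the only difference being that you write the computation pointwise while the paper writes it as an equation of composites. Your extra remarks on well-definedness and on why $f|_{A_0}=\sigma|_{A_0}$ are sound but not a departure from the paper's route.
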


\begin{proof} We must prove that $\sigma w=w f$. Now $f^{n!}(x)\in A_0$ for every $x\in X$, and $f$ and $\sigma$ coincide on $A_0$, so that $ff^{n!}=\sigma f^{n!}$. Therefore $\sigma w=\sigma \sigma^{-(n!)}f^{n!}=\sigma^{-(n!)}\sigma f^{n!}=\sigma^{-(n!)}ff^{n!}=\sigma^{-(n!)}f^{n!}f=w f$.\end{proof}

Notice that $w$ fixes the points of $A_0$, i.e., the points on the cycles, and winds up to the trees of the forest around the cycles. The roots of the trees are fixed by $w$. The permutation $\sigma$ is the restriction of $f$ to $A_0$.

\smallskip

We are ready to study the existence of right and left adjoints of the embeddings $\Cal C\hookrightarrow\Cal M$ and $\Cal F\hookrightarrow\Cal M$. 

\begin{proposition} The category $\Cal C$ is a reflective and coreflective subcategory of $\Cal M$, that is, the embedding $\Cal C\hookrightarrow\Cal M$ has both a left adjoint and a right adjoint. The left adjoint and right adjoint is the functor restriction $R\colon \Cal M\to\Cal C$ that maps the object $(X,f)$ to its restriction $(A_0, \sigma)$.\end{proposition}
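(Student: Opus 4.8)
The plan is to exhibit the functor $R\colon\Cal M\to\Cal C$, $(X,f)\mapsto (A_0,\sigma)$ explicitly and verify it is simultaneously left and right adjoint to the embedding $\iota\colon\Cal C\hookrightarrow\Cal M$, by producing the unit/counit morphisms and checking the triangle identities. Note first that $R$ is well-defined on morphisms: it is the preradical $R$ of the preceding proposition, so $R(g)=g|_{A_{0,f}}^{A_{0,f'}}$ makes sense because $g(A_{0,f})\subseteq A_{0,f'}$ by Lemma~\ref{arrows}. Also, if $(C,\sigma)$ is already in $\Cal C$, then $f=\sigma$ is a bijection, $A_0=C$ and the restriction is $(C,\sigma)$ itself; hence $R\iota=\mathrm{id}_{\Cal C}$, which is the decisive simplification.

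For the right adjoint statement I would take as counit the embedding $\varepsilon_{(X,f)}\colon\iota R(X,f)=(A_0,f|_{A_0}^{A_0})\hookrightarrow(X,f)$, which is a morphism in $\Cal M$, and check that for any $(C,\tau)\in\Cal C$ and any morphism $g\colon\iota(C,\tau)\to(X,f)$ in $\Cal M$, there is a unique $g'\colon(C,\tau)\to R(X,f)$ with $\varepsilon\circ\iota(g')=g$. Uniqueness is immediate since $\varepsilon$ is injective; for existence, since $C$ is a bijection every vertex of $G^d_\tau$ lies on an oriented cycle, so by Lemma~\ref{arrows}(b) $g$ maps $C$ into cycles of $G^d_f$, i.e.\ $g(C)\subseteq A_0$, and we set $g'$ to be the corestriction of $g$ to $A_0$; the square defining a morphism commutes because it did for $g$ and $\varepsilon$ is injective. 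This is exactly the computation already carried out in Section~\ref{pre} to show $\varepsilon$ is the prekernel of $\pi$, so it can be cited rather than repeated.

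For the left adjoint statement I would use the morphism $w=w_{(X,f)}\colon(X,f)\to(A_0,\sigma)=R(X,f)$ of the Lemma above as the unit, where $\sigma=f|_{A_0}^{A_0}$; composing with $\iota$ gives $\eta_{(X,f)}\colon(X,f)\to\iota R(X,f)$. Given $(C,\tau)\in\Cal C$ and $g\colon(X,f)\to\iota(C,\tau)$ in $\Cal M$, I must produce a unique $g'\colon R(X,f)\to(C,\tau)$ with $\iota(g')\circ\eta_{(X,f)}=g$. The natural candidate is $g'=g|_{A_0}$, the restriction of $g$ to $A_0$ (a morphism $(A_0,\sigma)\to(C,\tau)$ since $g$ is a morphism and $\sigma$ is a restriction of $f$). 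One then checks $g'\circ w = g$ on all of $X$: for $x\in X$, $g(w(x))=g(\sigma^{-(n!)}f^{n!}(x))$; since $g$ is a morphism it intertwines $f$ and $\tau$, so $g f^{n!}=\tau^{n!}g$, and since $g(A_0)\subseteq C$ and $g$ intertwines $\sigma$ (the restriction of $f$) with $\tau$ (the restriction of itself) one gets $g(\sigma^{-(n!)}f^{n!}(x))=\tau^{-(n!)}\tau^{n!}g(x)=g(x)$, using that $\tau^{n!}=\iota_C$ because $C$ is a bijection of an $|C|$-element set. Uniqueness of $g'$ follows because $w$ is surjective onto $A_0$ (it fixes $A_0$ pointwise). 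Finally the triangle identities: $R\varepsilon\circ\eta R=\mathrm{id}$ and $\varepsilon\iota\circ\iota\eta=\mathrm{id}$ reduce, via $R\iota=\mathrm{id}$ and the fact that $w$ and $\varepsilon$ both act as the identity on the subset $A_0$, to the identity on $(A_0,\sigma)$; I would record this in one or two lines.

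The main obstacle is purely bookkeeping rather than conceptual: making sure the exponent $n!$ in the definition of $w$ interacts correctly with a \emph{variable} codomain $(C,\tau)$ whose underlying set may have a different cardinality than $X$ — one must be careful that $\tau^{n!}=\iota_C$ still holds (it does, since the order of $\tau$ divides $|C|!$ which divides $n!$ only if $|C|\le n$; in fact one only needs $\tau^{n!}$ to be a power of $\tau$ fixing the image $g(A_0)$, and since $g(A_0)$ is a union of $\tau$-cycles of total size $\le n$, $\tau^{n!}$ restricted there is the identity). Once this point is handled cleanly, the rest is a routine diagram chase, and the two halves of the proof are essentially the prekernel/precokernel computations of Section~\ref{pre} repackaged as adjunctions.
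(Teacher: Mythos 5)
Your proposal is correct and follows essentially the same route as the paper: the same unit $w=\sigma^{-(n!)}f^{n!}$ and counit $\varepsilon$, with the universal properties verified by restriction/corestriction exactly as in the paper's proof. The cardinality worry in your last paragraph is a non-issue: the computation only uses $\tau^{-(n!)}\tau^{n!}=\iota_C$, which holds formally because $\tau$ is invertible, so one never needs $\tau^{n!}=\iota_C$.
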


\begin{proof} In order to show that $R\colon \Cal M\to\Cal C$ is the left adjoint of the embedding $\Cal C\hookrightarrow\Cal M$, with unit the morphism $w\colon (X,f)\to (A_0,\sigma)$, it suffices to show that for every object $(X,f)$ of $\Cal M$, every object $(C,\tau)$ of $\Cal C$ and every morphism $\varphi\colon (X,f)\to (C,\tau)$, there exists a unique morphism $\widetilde{\varphi}\colon (A_0,\sigma)\to (C,\tau)$ such that $\widetilde{\varphi} w=\varphi$. Let $\widetilde{\varphi}\colon (A_0,\sigma)\to (C,\tau)$ be the restriction to $A_0$ of $\varphi\colon (X,f)\to (C,\tau)$. Then $$\widetilde{\varphi} w=\widetilde{\varphi} \sigma^{-(n!)}f^{n!}=\tau^{-(n!)}\widetilde{\varphi} f^{n!}=\tau^{-(n!)}\varphi f^{n!}=\tau^{-(n!)}\tau^{n!}\varphi=\varphi.$$ The uniqueness of $\widetilde{\varphi} $ follows from the surjectivity of $w$.

We will now prove that $R$ is the right adjoint of the embedding $\Cal C\hookrightarrow\Cal M$. The counit of the adjunction is the inclusion morphism $\varepsilon\colon (A_0,\sigma)\to(X,f)$. It sufficies to show that, for every object $(X,f)$ of $\Cal M$, every object $(C,\tau)$ of $\Cal C$ and every morphism $\varphi\colon (C,\tau)\to (X,f)$, there exists a unique morphism ${\varphi'}\colon (C,\tau)\to (A_0,\sigma)$ with $\varepsilon\varphi'=\varphi$. Now the morphism $\varphi\colon (C,\tau)\to (X,f)$ maps cycles to cycles, so that $\varphi(C)\subseteq A_0$. Hence the corestriction ${\varphi'}\colon (C,\tau)\to (A_0,\sigma)$ of $\varphi\colon (C,\tau)\to (X,f)$ obtained by restricting the codomain to $A_0$ has the property that $\varepsilon\varphi'=\varphi$. The uniqueness of $\varphi' $ follows from the injectivity of $\varepsilon$.
\end{proof}

\begin{proposition} The category $\Cal F$ is a reflective subcategory of $\Cal M$ which is not coreflecting, that is, the embedding $\Cal F\hookrightarrow\Cal M$ has  a left adjoint, but not a right adjoint. The left adjoint is the precoradical $C$, viewed as a functor of $\Cal M$ to $\Cal F$.\end{proposition}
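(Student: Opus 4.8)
The plan is to prove the two assertions independently: that $C\colon\Cal M\to\Cal F$ is a left adjoint of the inclusion $\Cal F\hookrightarrow\Cal M$, and that this inclusion has no right adjoint.

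For the left adjoint, I would take the unit at $(X,f)$ to be the canonical projection $\pi\colon(X,f)\to(X/\!\!\sim,\overline{f})$; we already know this is a morphism in $\Cal M$ and that its target lies in $\Ob(\Cal F)$. It then remains to verify the universal property: given $(F,h)\in\Cal F$ and a morphism $\varphi\colon(X,f)\to(F,h)$ in $\Cal M$, there should be a unique $\widetilde\varphi\colon(X/\!\!\sim,\overline{f})\to(F,h)$ with $\widetilde\varphi\pi=\varphi$. The key step is to show that $\varphi$ is constant on each $\sim$-class. I would argue this via Lemma~\ref{arrows}(b): $\varphi$ carries any oriented cycle of $G^d_f$ onto an oriented cycle of $G^d_h$, but since $G^u_h$ is a forest the only oriented cycles of $G^d_h$ are the loops at the fixed points of $h$, so each oriented cycle of $G^d_f$ is sent by $\varphi$ to a single vertex. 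Combined with Lemma~\ref{NNN}, which identifies $x_1\sim x_2$ with the condition that $x_1$ and $x_2$ lie on a common oriented cycle of $G^d_f$, this shows that $\sim$ is contained in the kernel congruence of $\varphi$. The required factorisation $\widetilde\varphi$, together with the fact that it is again a morphism in $\Cal M$ (i.e.\ commutes with $\overline{f}$ and $h$), then comes from the universal property of the quotient of the unary algebra $(X,f)$ by a congruence -- the very argument already used to see that $C$ is a precoradical, specialised to the case in which the congruence on the target is equality -- and the uniqueness of $\widetilde\varphi$ follows from the surjectivity of $\pi$. Naturality of the unit in $(X,f)$ is just the naturality of $C$ proved earlier.

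For the failure of coreflectivity, I would argue by contradiction. If the inclusion $\Cal F\hookrightarrow\Cal M$ had a right adjoint $G\colon\Cal M\to\Cal F$, then every object $(X,f)$ of $\Cal M$ would receive a counit morphism $G(X,f)\to(X,f)$ in $\Cal M$, hence some morphism from an object of $\Cal F$ to $(X,f)$. I would then take $(X,f)$ with $f$ a fixed-point-free permutation, say an $n$-cycle with $n\ge2$, and show that there is no morphism $g\colon(F,h)\to(X,f)$ at all when $(F,h)\in\Cal F$ -- the same phenomenon the paper already observed for trivial objects in its discussion of prekernels, now for arbitrary forests. From $gh=fg$ one gets $gh^k=f^kg$ for every $k\ge0$; taking $N\ge|F|$, so that $h^N=h^{N+1}$ because $G^u_h$ is a forest, gives $f^Ng=f^{N+1}g$, and cancelling the invertible map $f^N$ yields $g=fg$. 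This would force the image of $g$ into the fixed-point set of $f$, which is empty, contradicting the fact that $F$ is a nonempty set. That contradiction rules out a right adjoint.

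I do not expect a genuine obstacle here; both halves reduce to the lemmas and constructions already in hand. The one place to be slightly careful is verifying that the factored map $\widetilde\varphi$ really respects the unary operations (handled by the same quotient argument that established the precoradical $C$), and keeping the exponent $N$ in the second part at least $|F|$, so that the forest condition on $G^u_h$ delivers $h^N=h^{N+1}$.
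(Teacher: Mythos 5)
Your proposal is correct and follows essentially the same route as the paper: the unit is the canonical projection $\pi\colon(X,f)\to(X/\!\!\sim,\overline f)$, the universal property reduces to showing $\sim$ is contained in the kernel congruence of $\varphi$, and the right adjoint is ruled out by exhibiting a fixed-point-free permutation $(X,f)$ with $\Hom_{\Cal M}((F,h),(X,f))=\emptyset$ for all forests $(F,h)$. The only (harmless) variations are that you verify the containment via Lemmas~\ref{arrows}(b) and~\ref{NNN}(c) rather than the coequalizer identity $\varphi f^{n}=\varphi f^{n+1}$, and you derive $g=fg$ algebraically where the paper just notes that fixed points of the forest must map to fixed points of $f$.
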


\begin{proof} To show that $C\colon \Cal M\to\Cal F$ is the left adjoint of the embedding $\Cal F\hookrightarrow\Cal M$, with unit the canonical projection $\pi\colon (X,f)\to (X/\!\!\sim,\overline{f})$, it suffices to prove that for every object $(X,f)$ of $\Cal M$, every object $(F,g)$ of $\Cal F$ and every morphism $\varphi\colon (X,f)\to (F,g)$, there exists a unique morphism $\widetilde{\varphi}\colon (X/\!\!\sim,\overline{f})\to (F,g)$ such that $\widetilde{\varphi} \pi=\varphi$. This is equivalent to proving that $\varphi\colon (X,f)\to (F,g)$ induces a morphism $\widetilde{\varphi}\colon (X/\!\!\sim,\overline{f})\to (F,g)$, that is, that the congruence $\sim$ is contained in the kernel $\sim_\varphi$ of $\varphi$. Now $\sim$, coequalizer of $f^{n}$ and $f^{n+1}$, is the congruence generated by the arrows on the cycles of $f$, so that it suffices to prove that $\varphi$ coequalizes $f^{n}$ and $f^{n+1}$, i.e., that $\varphi f^{n}=\varphi f^{n+1}$. But $\varphi f^{n}=g^{n}\varphi=g^{n+1}\varphi=\varphi f^{n+1}$, as desired.

In order to prove that the embedding $\Cal F\hookrightarrow\Cal M$ does not have a right adjoint, it suffices to show that there exist objects $(X,f)$ in $\Cal M$ for which $\Hom_{\Cal M}(\Cal F,(X,f))=\emptyset$. To this end, it suffices to take as $(X,f)$ any object with $f$ fixed-point-free, for instance a cycle permuting all elements of $X$ ($|X|> 1$). Then every cycle of length zero of any $F$ in $\Cal F$ (that is, any fixed point, which exists in all $F\in \Cal F$), should be mapped to a fixed point of $X$, which does not exist.\end{proof}

\section{A pretorsion theory}\label{QQQ}

The second half of this paper, from Section~\ref{pre} on, has been deeply influenced by the article~\cite{AC}. In that paper, the category $\pre$ of preordered sets $(A,\rho)$ is studied. Here $A$ is any non-empty set, and $\rho$ is a preorder on $A$, that is, a reflexive and transitive relation on $A$. The morphisms $g\colon (A,\rho)\to (A',\rho')$ in the category $\pre$ are the mappings $g$ of $A$ into $A'$ such that $a\rho b$ implies $g(a)\rho' g(b)$ for all $a,b\in A$. In $\pre$, there is a pretorsion theory $(\Equiv,\ParOrd)$, where  $\Equiv$ is the class of all objects $(A,\rho)$ of $\pre$ with $\rho$ an equivalence relation on $A$ and $\ParOrd$ is the class of all objects $(A,\rho)$ of $\pre$ with $\rho$ a partial order on $A$. 

Torsion  theories in general categories are studied in the papers \cite{GJ}, \cite{GJM} and~\cite{JT}. Since these articles are rather technical while our setting is extremely simple, the first author and Carmelo Finocchiaro introduced in \cite{AC}  a much easier notion of pretorsion theory.  We are very grateful to Marino Gran, Marco Grandis and Sandra Mantovani for some suggestions.

Our setting is the following.
  Fix an arbitrary category $\Cal C$ and two non-empty classes $\Cal T,\Cal F$ of objects of $\Cal C$, both closed under isomorphism. Set $\Cal Z:=\Cal T\cap\Cal F$. For every pair $A,B$ of objects of $\Cal C$, $\Triv_{\Cal Z}(A, B)$ indicates the set of  all morphisms in $\Cal C$ that factors through an object of $\Cal Z$. The morphisms in $\Triv_{\Cal Z}(A, B)$ are called $\Cal Z$-trivial.

Let $f\colon A\to A'$ be a morphism in $\Cal C$. A \emph{$\Cal Z$-prekernel} of $f$ is a morphism $k\colon X\to A$ in $\Cal C $ such that: 
\begin{enumerate}
	\item $fk$ is a $\Cal Z$-trivial morphism.
	\item Whenever $ \ell \colon Y\to A$ is a morphism in $\Cal C$ and $f \ell$ is $\Cal Z$-trivial, then there exists a unique morphism $ \ell'\colon Y\to X$ in $\Cal C$ such that $ \ell=k\ell'$. 
\end{enumerate}

The $\Cal Z$-prekernel of $f\colon A\to A'$ turns out to be a subobject of $A$ unique up to isomorphism, when it exists.

Dually, for the \emph{$\Cal Z$-precokernel} of $f$, which  is a morphism $A'\to X$. If
$f\colon A\to B,\ g\colon B\to C$ are morphisms in $\Cal C$, we say that $$\xymatrix{
	A \ar[r]^f &  B \ar[r]^g &  C}$$ is a \emph{short $\Cal Z$-preexact sequence} in $\Cal C$ if $f$ is a $\Cal Z$-prekernel of $g$ and $g$ is a $\Cal Z$-precokernel of $f$.
	
We say that a pair $(\Cal T,\Cal F)$ is a {\em pretorsion theory  for $\Cal C$}, where $\Cal T,\Cal F$ are classes of objects of $\Cal C$ closed under isomorphism and $\Cal Z:=\Cal T\cap\Cal F$, if it satisfies the following two conditions:
  
  (1) For every object $B$ of $\Cal C$ there is a short $\Cal Z$-preexact sequence \begin{center}$\xymatrix{
	A \ar[r]^f &  B \ar[r]^g &  C}$ \end{center}with $A\in\Cal T$ and $C\in\Cal F$.
	
	(2) $\Hom_{\Cal C}(T,F)=\Triv_{\Cal Z}(T, F)$  for every pair of objects $T\in\Cal T$, $F\in\Cal F$.
	
	\bigskip
	
	There is a clear overlapping between this definition of pretorsion theory and the theory developed in \cite{GJM}. In \cite{AC}, a case concerning the category $\Cal C=\pre$ of all non-empty preordered set was studied. The pretorsion theory was the pair $(\Equiv,\ParOrd)$ of sets endowed with an equivalence relation and sets endowed with a partial order, respectively.
	In this paper, we have seen that $(\Cal C,\Cal F)$ is a pretorsion theory in $\Cal M$ ((a) and (b) in Theorem~\ref{111}).
	
	\bigskip
		
There is a functor $U\colon \Cal M\to\pre$, which is a canonical embedding. It associates to any object $(X,f)$ of $\Cal M$ the preordered set $(X,\rho_f)$, where $\rho_f$ is defined, for every $x,y\in X$, setting $x\rho_f y$ if $x=f^t(y)$ for some integer $t\ge0$. Any morphism $g\colon (X,f)\to (X',f')$ in $\Cal M$, is a morphism $g\colon (X,\rho_f)\to (X',\rho_{f'})$ in $\pre$, because if $x,y\in X$ and $x\rho_f y$, then $x=f^t(y)$ for some integer $t\ge0$. The commutativity of diagram~(\ref{homo}) yields that $g(x)=gf^t(y)={f'}^tg(y)$, so $g(x)\rho_{f'} g(y)$. Thus we have a functor $U\colon \Cal M\to\pre$, which is clearly faithful. Hence $\Cal M$ is isomorphic to a subcategory of $\pre$. 

\begin{proposition} The following equalities hold:
\item{\rm (a)} $U(\Cal C) =U(\Ob(\Cal M))\cap\Equiv$.
\item{\rm (b)} $U(\Cal F) =U(\Ob(\Cal M))\cap\ParOrd$.\end{proposition}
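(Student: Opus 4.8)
The plan is to prove both equalities directly from the definitions of the functor $U$ and the relation $\rho_f$. For part (a), I first recall that $U(X,f) = (X,\rho_f)$ where $x\rho_f y$ iff $x = f^t(y)$ for some $t\ge 0$; this relation is automatically reflexive ($t=0$) and transitive (add exponents), so the only question is symmetry. I would show: $(X,f)\in\Cal C$ (i.e.\ $f$ is a bijection) if and only if $\rho_f$ is symmetric. For the forward direction, if $f$ is a bijection on the finite set $X$, then $f$ lies in $S_n$, which has finite order, so $f^{n!}=\iota_X$; hence whenever $x=f^t(y)$ we also have $y = f^{n!-t'}(x)$ for an appropriate $t'$ (reducing $t$ modulo $n!$), giving $y\rho_f x$. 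For the converse, suppose $\rho_f$ is symmetric; then for every $y\in X$, since $f(y)\rho_f y$ we get $y\rho_f f(y)$, i.e.\ $y = f^{s}(f(y)) = f^{s+1}(y)$ for some $s\ge 0$. This exhibits $y$ on an oriented cycle of $G^d_f$, and since this holds for all $y$, every vertex of $G^d_f$ lies on a cycle, which forces $f$ to be a bijection (equivalently, $A_0 = X$ in the notation of Proposition~\ref{leila}). Combining, $(X,\rho_f)\in\Equiv$ exactly when $(X,f)\in\Cal C$, which is the asserted equality $U(\Cal C) = U(\Ob(\Cal M))\cap\Equiv$.

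For part (b), I would similarly characterize when $\rho_f$ is antisymmetric, hence a partial order (it is always a preorder). The claim is: the graph $G^u_f$ is a forest if and only if $\rho_f$ is antisymmetric. If $\rho_f$ is antisymmetric and $G^u_f$ had a cycle, then by the analysis in Section~\ref{XXX} there would be distinct vertices $x,y$ both lying on an oriented cycle of $G^d_f$, so $x = f^a(y)$ and $y = f^b(x)$ with $a,b\ge 0$ and $x\ne y$, contradicting antisymmetry; so $G^u_f$ is a forest. Conversely, if $G^u_f$ is a forest and $x\rho_f y$, $y\rho_f x$ with $x\ne y$, then $x = f^a(y)$, $y = f^b(x)$ with $a+b > 0$, so $x = f^{a+b}(x)$, placing $x$ (and the intermediate vertices) on an oriented cycle of length $a+b>0$ in $G^d_f$; such a cycle projects to a cycle in $G^u_f$ unless $a+b = $ length of a transposition-type back-and-forth — but a genuine length-$\ge 1$ oriented cycle in $G^d_f$ always yields a cycle in $G^u_f$ when the vertices are distinct, contradicting the forest hypothesis. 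Hence $\rho_f$ is antisymmetric, i.e.\ a partial order, giving $U(\Cal F) = U(\Ob(\Cal M))\cap\ParOrd$.

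The main obstacle I anticipate is the converse direction in part (b): being careful about the degenerate case where an "oriented cycle" in $G^d_f$ of length $2$ corresponds to a transposition, which in $G^u_f$ is a single edge, not a cycle (recall the undirected graph has no multiple edges). One must check that antisymmetry of $\rho_f$ genuinely fails on a transposition pair: if $f(x)=y$, $f(y)=x$ with $x\ne y$, then $x = f^1(y)$ and $y = f^1(x)$, so $x\rho_f y$ and $y\rho_f x$ but $x\ne y$ — so antisymmetry does fail, consistent with the fact that a graph $G^u_f$ containing a transposition edge is still a forest only if that edge is a whole component (an isolated edge, which is a tree). Wait — an isolated edge \emph{is} a tree, so $G^u_f$ can be a forest while containing a transposition. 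This means $\rho_f$ is \emph{not} antisymmetric in that case, so the characterization "$G^u_f$ forest $\iff$ $\rho_f$ antisymmetric" is actually \emph{false}; the correct statement must be that $(X,f)\in\Cal F$ iff $f^n = f^{n+1}$ (Proposition in Section~\ref{pre}), i.e.\ $G^d_f$ has no oriented cycle of length $\ge 2$ and no... hmm, but a transposition has $f^2 = \iota \ne f$ in general. So in fact the condition defining $\Cal F$ already excludes transpositions, and the right equivalence is: $G^d_{f}$ (not $G^u_f$) has oriented cycles only of length $1$ $\iff$ $\rho_f$ is antisymmetric $\iff$ $f^n = f^{n+1}$ $\iff$ $(X,f)\in\Cal F$. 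I would therefore route the proof of (b) through the condition $f^n = f^{n+1}$ rather than through the undirected graph directly, using the Proposition in Section~\ref{pre} to translate, and verify carefully that antisymmetry of $\rho_f$ is equivalent to the absence of oriented cycles of length $\ge 2$ in $G^d_f$, which by that Proposition is equivalent to membership in $\Cal F$. This reconciliation of the directed-cycle picture with the stated forest condition is the delicate point.
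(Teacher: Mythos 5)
Your proof is correct and follows essentially the same route as the paper: in (a) the symmetry of $\rho_f$ forces every point to satisfy $x=f^{t+1}(x)$, hence $f$ is surjective and so bijective, and in (b) antisymmetry of $\rho_f$ is matched against the absence of directed cycles of length $\ge 2$, i.e.\ against the condition $f^n=f^{n+1}$. Your worry about transpositions is a fair observation about the paper's loose use of ``forest'' for the simple graph $G^u_f$ (a transposition is a directed $2$-cycle even though its undirected simple graph is a single edge, hence a tree), and your resolution --- routing the argument through $f^n=f^{n+1}$ rather than through $G^u_f$ literally --- is exactly the reading the paper intends and implicitly uses in its own proof.
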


\begin{proof} The proof of the inclusion $U(\Cal C) \subseteq U(\Ob(\Cal M))\cap\Equiv$ in (a) is easy. Conversely, assume $(X,f)\in\Ob(\Cal M)$ and $(X,\rho_f)\in\Equiv$. For every $x\in X$, we have that $f(x)\rho_f x$. As $\rho_f$ is an equivalence relation on $X$, we get that $x\rho_ff(x)$. Thus there exists $t\ge 0$ such that $x=f^{t+1}(x)$. It follows that $x\in f(X)$. Therefore the mapping $f$ is onto, i.e., a bijection, so $(X,f)\in\Cal C$.

For (b), suppose $(X,f)\in\Cal F$, i.e., that $G^d_f$ is a forest. In order to prove that $\rho_f$ is a partial order on $X$, assume $x,y\in X$, $x\rho_fy$ and $y\rho_f x$. Then there are $t,u\ge 0$ such that $x=f^t(y)$ and $y=f^u(x)$. Then $x=f^{t+u}(x)$. Since $G^d_f$ is a forest, i.e., has no cycle of length $>1$, it follows that $t+u=0$, so $t=u=0$, and $x=y$. For the opposite inclusion, assume $(X,f)\in\Ob(\Cal M)$ and $(X,\rho_f)\in\ParOrd$. For every $x\in X$, we have that $f^{n+1}(x)\rho_f f^n(x)$. Now $G^d_f$ is a forest on a cycle, and  $f^n(x)$ is on the cycle, so that there exists $v>0$ such that $f^n(x)=f^{n+v}(x)$. Hence $f^n(x)=f^{n+v}(x)\rho_f f^{n+1}(x)$. Therefore $f^n(x)\rho_f f^{n+1}(x)$ and $f^{n+1}(x)\rho_f f^n(x)$, which imply $f^{n+1}(x)= f^n(x)$ because $\rho_f$ is a partial order. Thus $f^n=f^{n+1}$, so $(X,f)\in\Cal F$.\end{proof}

The functor $U$ is not full. For example, consider the permutations $f=(1\ 2\ 3)$ and $g=(1\ 2)$ of $X=\{1,2,3\}$. Then $(X,f)$ is an object of $\Cal M$, but $g$ is not an endomorphism of $(X,f)$ in $\Cal M$, because $fg\ne gf$. Nevertheless $\rho_f$ is the trivial preorder on $X$, in which any two elements of $X$ are in the relation $\rho_f$, so that $g$ is an endomorphism of $(X,\rho_f)$ in the category $\pre$.

\section{The stable category $\underline{\Cal M}$}

Following \cite{AC}, it is now natural to introduce a congruence $R$ on the category $\Cal M$ that identifies all trivial morphisms between two objects of $\Cal M$. Then it is possible to construct the quotient category $
\underline{\Cal M}:=\Cal M/R$, and call it the {\em stable category} of $\Cal M$. But we will see that such a category $\underline{\Cal M}$ and the category $\Cal M$ have a terminal object, but don't have initial objects. So our prekernels and precokernels in $\Cal M$ do not correspond to kernels and cokernels in the stable category  $\underline{\Cal M}$. 

For every pair of objects $(X,f),(X',f')$ of $\Cal M$, there is an equivalence relation $R_{X,X'}$ on the set $\Hom_{\Cal M}((X,f),(X',f'))$ defined, for every $g,h\colon$\linebreak $(X,f)\to(X',f')$, by $g\,R_{X,X'}\,h$ if, for every connected component $C$ of $G^u_f$, either

(1) $g(x)=h(x)$ for every $x\in C$, or

(2) $|g(C)|=1,\ |h(C)|=1,\ f'(g(C))=g(C)$ and $f'(h(C))=h(C)$.

\noindent Here we view the connected components $C$ as subsets of $X$, and $|A|$ denotes the cardinality of the set $A$.

In order to prove that $R_{X,X'}$ is an equivalence relation, reflexivity and symmetry are clear. Transitivity is a little more tricky. Suppose $g,h,\ell\colon$\linebreak $(X,f)\to(X',f')$, $g\,R_{X,X'}\,h$ and $h\,R_{X,X'}\,\ell$. For every connected component $C$ of $G^u_f$, we have four possible cases.

{\em First case: Condition {\rm (1)} holds for both the pairs $(g,h)$ and $(h,\ell)$.} That is, $g(x)=h(x)$ and $h(x)=\ell(x)$ for every $x\in C$. Then $g(x)=\ell(x)$ for every $x\in C$, and we have Condition {\rm (1)} for the pair $(g,\ell)$ as well.

{\em Second case: Condition {\rm (1)} holds for the pair $(g,h)$ and Condition {\rm (2)} holds for the pair $(h,\ell)$.} That is, $g(x)=h(x)$ for every $x\in C$, $|h(C)|=1,\ |\ell(C)|=1,\ f'(h(C))=h(C)$ and $f'(\ell(C))=\ell(C)$. Then $|g(C)|=1$ and $ f'(g(C))=g(C)$, so that Condition {\rm (2)} holds for the pair $(g,\ell)$ as well.

{\em Third case: Condition {\rm (2)} holds for the pair $(g,h)$ and Condition {\rm (1)} holds for the pair $(h,\ell)$.} This is similar to the second case.

{\em Fourth case: Condition {\rm (2)} holds for both the pairs $(g,h)$ and $(h,\ell)$.} Then, trivially, Condition {\rm (2)} holds for the pair $(g,\ell)$ as well.

Now we want to prove that the assignment $(X,X')\mapsto R_{X,X'}$ is a congruence on the category $\Cal M$ in the sense of \cite[page~51]{ML}. Let $g,h\colon (X,f)\to(X',f')$ be morphisms with $g\,R_{X,X'}\,h$, and $$m\colon (Y,g)\to(X,f),\ \ \ell\colon (X',f')\to(X'',f'')$$ be arbitrary morphisms in $\Cal M$. We must show that $gm\,R_{Y,X'}\,hm$ and $\ell g\,R_{X,X''}\,\ell h$.  This is also straightforward, as several of the previous verifications, and we omit it here. 

Therefore it is possible to construct the stable category $\underline{\Cal M}:={\Cal M}/R$ like in \cite{AC}. The difference with \cite{AC} is that now the stable category $\underline{\Cal M}$ does not have a zero object, as we will show in the next paragraph, so that it is not possible to construct kernels and cokernels in $\underline{\Cal M}$ in the classical sense, hence prekernels and precokernels in $\Cal M$ do not correspond to kernels and cokernels in $\underline{\Cal M}$.

The categories $\Cal M$ and $\underline{\Cal M}$ have terminal objects (the singleton $(\{1\}, (1))$ with the identity morphism). To see that $\Cal M$ and $\underline{\Cal M}$  don't have zero objects, notice that there are objects of $\Cal M$, for instance the cycle $(\{1,2,3\},(1\ 2\ 3))$, for which $$\Hom_{\Cal M}((\{1\}, (1)),(\{1,2,3\},(1\ 2\ 3)))$$ is the empty set. Hence $\Hom_{\underline{\Cal M}}((\{1\}, (1)),(\{1,2,3\},(1\ 2\ 3)))$ is the emptyset, which is not possible in a pointed category.

\end{document}